\newtheorem{theorem}{Theorem}
\newtheorem{lemma}{Lemma}
\newtheorem{cor}{Corollary}
\newcommand{\res}[1]{\begin{array}[d]{l}\\{\rm Res}\\^{#1}\end{array}\hspace{-1mm}}
\newcommand{\bc}{\mathbb{C}}
\newcommand{\bp}{\mathbb{P}}
\newcommand{\bq}{\mathbb{Q}}
\newcommand{\br}{\mathbb{R}}
\newcommand{\bz}{\mathbb{Z}}
\newcommand{\modm}{\mathcal{M}}
\newcommand{\f}{\mathcal{F}}
\newcommand{\fat}{\f\hspace{-.3mm}{\rm at}_{g,n}}
\begin{document}

\title[String and dilaton equations for counting lattice points]{String and dilaton equations for counting lattice points in the moduli space of curves.}
\author{Paul Norbury}
\address{Department of Mathematics and Statistics\\
University of Melbourne\\Australia 3010}
\email{pnorbury@ms.unimelb.edu.au}
\thanks{The author was partially supported by ARC Discovery project DP1094328.}
\keywords{}
\subjclass[2010]{32G15; 30F30; 05A15}
\date{}

\begin{abstract}

\noindent We prove that the Eynard-Orantin symplectic invariants of the curve $xy-y^2=1$ are the orbifold Euler characteristics of the moduli spaces of genus $g$ curves.  We do this by associating to the Eynard-Orantin invariants of $xy-y^2=1$ a problem of enumerating covers of the two-sphere branched over three points.  This viewpoint produces new recursion relations---string and dilaton equations---between the quasi-polynomials that enumerate such covers.

\end{abstract}

\maketitle

\section{Introduction}
Consider a genus zero plane curve $C\subset\bc^2$ such that the branch points of the first coordinate $x:C\to\bc$ are simple.  Eynard and Orantin \cite{EOrInv} have developed a sequence of invariants of such plane curves to study enumerative problems in geometry.  In this paper we describe a Hurwitz problem related to the Eynard-Orantin invariants of the plane curve $xy-y^2=1$.  The invariants $\omega^{g}_n(C)=\omega^{g}_n(z_1,...,z_n)dz_1...dz_n$ are multidifferentials on $C$, for all integers $g\geq 0$ and $n>0$,  that satisfy recursion relations with a Virasoro algebra structure.  The meromorphic differentials $\omega^{g}_1(C)$ are used to define {\em symplectic} invariants $F^{(g)}(C)\in\bc$ (essentially $\omega^{g}_0(C)$) which are invariant under automorphisms of $\bc^2$ that preserve the symplectic form $dx\wedge dy$.  See Section~\ref{sec:EO} for a precise definition of the invariants in the more general setting of Torelli marked curves of genus $g$ immersed in $\bc^2$.   For different choices of the curve $C$ the invariants $\omega^{g}_n(C)$ store enumerative information such as tautological intersection numbers over the moduli space of genus $g$ curves with $n$ labeled points \cite{EynMum,KonInt}, simple Hurwitz numbers \cite{BEMSMat,BMaHur}, Weil-Petersson volumes of the moduli space of curves \cite{EOrWei,MirSim} and conjecturally Gromov-Witten invariants of (local) toric Calabi-Yau 3-folds \cite{BKMP,ZhoLoc}.

The Eynard-Orantin invariants are defined via recursion relations that give an effective  algorithm to calculate $\omega^{g}_n(C)$ from $\omega^{g'}_{n'}(C)$ for $g'+n'\leq g+n$ and $g'\leq g$.  For example, $F^{(2)}(C)$ requires one to first calculate $\omega^{g}_{n}(C)$ for $(g,n)=(0,1),(0,2),(0,3),(1,1),(1,2)$ and $(2,1)$.  It is more difficult to find a non-recursive expression for the invariants.   The following theorem give a non-recursive expression for the invariants for the curve $xy-y^2=1$ in terms of the moduli space of curves.
\begin{theorem} \label{th:symp}
The Eynard-Orantin symplectic invariants of the curve $xy-y^2=1$ are the orbifold Euler characteristics of the moduli spaces of genus $g$ curves:
\[ F^{(g)}=\chi(\modm_g),\quad g>1.\]
\end{theorem}

The theorem is proven by associating the following Hurwitz problem to the invariants $\omega^{g}_n$ of $xy-y^2=1$.  Consider genus $g$ branched covers of $S^2$ unramified over $S^2-\{0,1,\infty\}$ with points in the fibre over $\infty$ labeled $(p_1,...,p_n)$ and with ramification $(b_1,...,b_n)$, ramification $(2,2,...,2)$ over $1$ and ramification greater than 1 at all points above $0$.  Define $N_{g,n}(b_1,...,b_n)\in\bq$ to be the weighted count of connected such coverings, counted up to isomorphism so that the weight of each branched cover is one divided by the order of its group of automorphisms.   This was studied in \cite{NorCou} where it was shown that $N_{g,n}(b_1,...,b_n)$ is a symmetric quasi-polynomial in the $b_i$ in the sense that it is polynomial on each coset of the sublattice of finite index $2\bz^n\subset\bz^n$, symmetric under permutations that leave a coset invariant.  Equivalently,
there exists polynomials $N_{g,n}^{(k)}(b_1,...,b_n)$ for $k=1,...,n$ such that $N_{g,n}(b_1,...,b_n)$ decomposes
\[ N_{g,n}(b_1,...,b_n)=N_{g,n}^{(k)}(b_1,...,b_n),\quad b_i{\rm\ odd\ }\ i\leq k,\ \ b_i{\rm\ even\ }\ i> k\]
and $N_{g,n}^{(k)}(b_1,...,b_n)$ is symmetric in $b_i$ for $i\leq k$ and in $b_i$ for $ i> k$.
In fact only even $k$ is necessary since by definition $N_{g,n}(b_1,...,b_n)$ vanishes if the number of odd $b_i$ is odd.  Each $N_{g,n}^{(k)}(b_1,...,b_n)$ is a polynomial in the $b_i^2$, symmetric under permutations that preserve the parity of the $b_i$.  The number $N_{g,n}(b_1,...,b_n)$ is presented in \cite{NorCou} in terms of counting lattice points inside integral convex polytopes depending on $(b_1,...,b_n)$ which make up a cell decomposition of $\modm_{g,n}$, the moduli space of genus $g$ curves with $n$ labeled points, and hence is said to count lattice points in the moduli space of curves.

The generating function 
\[F^{(g)}_n(z_1,...,z_n)=\sum_{b_i>0} N_{g,n}(b_1,...,b_n)z_1^{b_1}...z_n^{b_n}\] 
has radius of convergence of 1 in each variable, and extends to a meromorphic function in each variable on the whole complex plane.  See Lemma~\ref{th:inv} in Section~\ref{sec:rec}.

It was proven in \cite{NorCou} that the $N_{g,n}(b_1,...,b_n)$ satisfy recursion relations which uniquely determine them from $N_{0,3}(b_1,b_2,b_3)=1$ (when $b_1+b_2+b_3$ is even and zero otherwise.)
These recursion relations are used to prove the following theorem.

\begin{theorem}  \label{th:main}
For $2g-2+n>0$
\[\Omega^g_n=\frac{\partial}{\partial z_1}...\frac{\partial}{\partial z_n}F^{(g)}_ndz_1...dz_n\]
are the Eynard-Orantin invariants of the plane curve $xy-y^2=1$.
\end{theorem}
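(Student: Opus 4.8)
The plan is to realize $xy-y^2=1$ as an explicit genus--zero spectral curve, reduce the Eynard--Orantin recursion on it to a rational residue formula, check the initial datum $\omega^{(0)}_3$ by hand, and then show that the recursion of \cite{NorCou} for the $N_{g,n}$ becomes exactly this residue formula after passing to generating functions. Concretely, writing $xy-y^2=1$ as $x=y+1/y$ and setting $y=z$ gives the rational parametrization $(x,y)=(z+1/z,\,z)$, $z\in\bp^1$. Then $dx=\frac{z^2-1}{z^2}dz$ vanishes precisely at $z=\pm1$, so $x$ has two simple branch points, and the sheet--exchange involution is $\sigma(z)=1/z$. Taking the genus--zero fundamental bidifferential $B(z_1,z_2)=\frac{dz_1\,dz_2}{(z_1-z_2)^2}$, a short computation turns the recursion kernel $\frac{\int_{\sigma(z)}^{z}B(z_0,\cdot)}{2(y(z)-y(\sigma(z)))\,dx(z)}$ into the explicit rational kernel
\[
K(z_0,z)=\frac{z^3\,dz_0}{2\,(z^2-1)(z_0-z)(zz_0-1)\,dz}.
\]
By Lemma~\ref{th:inv} each $\omega^{(g)}_n=\partial_{z_1}\cdots\partial_{z_n}F^{(g)}_n\,dz_1\cdots dz_n$ is a well-defined meromorphic multidifferential on $(\bp^1)^n$; it is symmetric because $N_{g,n}$ is, and since it is exact in each variable it has vanishing residues everywhere --- two of the defining features of Eynard--Orantin invariants on a genus--zero curve.

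Next I would pin down the initial data by computing $\omega^{(0)}_3$ in two ways. From $N_{0,3}(b_1,b_2,b_3)=1$ (when $b_1+b_2+b_3$ is even, $0$ otherwise) one gets $F^{(0)}_3=\frac12\big(\prod_i\frac{z_i}{1-z_i}-\prod_i\frac{z_i}{1+z_i}\big)$, hence
\[
\omega^{(0)}_3=\tfrac12\Big(\prod_i\tfrac{1}{(1-z_i)^2}-\prod_i\tfrac{1}{(1+z_i)^2}\Big)\,dz_1dz_2dz_3 .
\]
On the other hand, the Eynard--Orantin $\omega^{(0)}_3$ of the spectral curve above equals $\sum_{a=\pm1}{\rm Res}_{z\to a}\frac{B(z,z_1)B(z,z_2)B(z,z_3)}{dx\,dy}$, and evaluating the two simple residues at $z=\pm1$ reproduces exactly the expression above. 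This also confirms that the poles of $\omega^{(g)}_n$ sit at the branch points $z_i=\pm1$ --- consistent with the radius--of--convergence statement of Lemma~\ref{th:inv} --- and the same kind of direct computation disposes of $\omega^{(1)}_1$; the bounds on the order of these poles follow from the degree estimates for the polynomials $N^{(k)}_{g,n}$.

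The main step is to prove that the $\omega^{(g)}_n$ satisfy the Eynard--Orantin topological recursion
\[
\omega^{(g)}_{n+1}(z_0,z_1,\ldots,z_n)=\sum_{a=\pm1}{\rm Res}_{z\to a}\,K(z_0,z)\Big[\omega^{(g-1)}_{n+2}(z,1/z,z_1,\ldots,z_n)+\sum\omega^{(g_1)}_{|I|+1}(z,z_I)\,\omega^{(g_2)}_{|J|+1}(1/z,z_J)\Big],
\]
the inner sum over stable splittings $g_1+g_2=g$, $I\sqcup J=\{1,\ldots,n\}$, with the unstable pieces carrying the usual $\omega^{(0)}_1=y\,dx$. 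For this I would take the recursion of \cite{NorCou}, pass to generating functions to obtain an identity among the $F^{(g)}_n$, substitute the kernel $K$, and carry out the residues at $z=1$ and $z=-1$; extracting the coefficient of $z_0^{b_0}$ on the left then turns the right-hand side into a double sum over lattice variables which I expect to coincide termwise with the combinatorial recursion --- the $\omega^{(g-1)}_{n+2}$ term and the convolution term matching respectively its non-separating and separating contributions, and the two branch points together accounting, through the alternating signs produced at $z=-1$, for the parity constraints responsible for the splitting $N_{g,n}=N^{(k)}_{g,n}$ (indeed the pole of $F^{(g)}_n$ at $z_i=-1$ exists precisely because $N_{g,n}$ is only piecewise polynomial, and this is why the two-branch-point curve $xy-y^2=1$, rather than the Airy curve governing the top-degree part, is the correct one). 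Since both the combinatorial recursion and the Eynard--Orantin recursion determine their solutions uniquely from the common seed ($N_{0,3}=1$, respectively the spectral curve whose $\omega^{(0)}_3$ has just been matched), this identification proves the theorem.

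The step I expect to be the main obstacle is exactly this conversion of residues into coefficient extraction: one must match the local behaviour of $\partial_z F^{(g)}_n$ near $z=\pm1$ --- that is, the piecewise-polynomial tails of the lattice-point counts near their radius of convergence --- with the residues prescribed by $K$, while keeping correct track of the factor $\tfrac12$, the involution $z\mapsto1/z$, the contribution of the second branch point $z=-1$, and the unstable $\omega^{(0)}_1$ terms that enter the recursion kernel but do not appear among the genuine invariants.
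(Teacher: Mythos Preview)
Your overall strategy matches the paper's: parametrize the curve by $z\in\bp^1$ with $(x,y)=(z+1/z,z)$, branch points at $\pm1$, involution $z\mapsto1/z$; transform the recursion of \cite{NorCou} into a recursion among the generating functions (this is exactly the paper's Lemma~1); and invoke the antisymmetry of Lemma~\ref{th:inv}. Where you diverge is in the execution of the main step, and this is precisely the place you flag as your ``main obstacle.''

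You propose to evaluate the Eynard--Orantin residues at $z=\pm1$, expand, and extract the coefficient of $z_0^{b_0}$ so as to match the combinatorial recursion termwise. The paper avoids this entirely. After rewriting the generating--function recursion (Lemma~1) in terms of differentials, it applies the Cauchy formula~(\ref{eq:cauch}),
\[
\omega^{(g)}_{n+1}(z_0,z_S)=\sum_{\alpha=\pm1}\res{z=\alpha}\frac{dz_0}{z_0-z}\,\omega^{(g)}_{n+1}(z,z_S),
\]
and substitutes the generating--function recursion for $\omega^{(g)}_{n+1}(z,z_S)$ under the residue. The point is that the recursion from Lemma~1 splits into a piece with poles only at $z=\pm1$ and a remainder that is analytic there; the latter contributes nothing to the residues and simply disappears. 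In the surviving piece one recognises $\frac{dz\,dz_j}{(z-z_j)^2}=\omega^{(0)}_2(z,z_j)$ and $\frac{dz\,dz_j}{(1-zz_j)^2}=-\omega^{(0)}_2(1/z,z_j)$, absorbs these into the sum over splittings, and then symmetrises under $z\mapsto1/z$ to assemble the kernel $K(z_0,z)$. No coefficient extraction, no analysis of the tails near the radius of convergence, and no separate base--case check of $\omega^{(0)}_3$ or $\omega^{(1)}_1$ is needed: the argument lives entirely at the level of rational forms and covers all $2g-2+n>0$ at once.

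Two small corrections. First, the unstable contribution that must be absorbed into the splitting sum is $\omega^{(0)}_2=B$, not $\omega^{(0)}_1$; the paper's convention is $\omega^{(0)}_1=0$. Second, your kernel is off by a sign: with the paper's conventions one gets
\[
K(z_0,z)=\frac{-z^3}{2(z^2-1)(z_0-z)(zz_0-1)}\cdot\frac{dz_0}{dz}.
\]
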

The Eynard-Orantin invariants satisfy further recursion relations known as string and dilaton equations---see Section~\ref{sec:string}.  They give rise to new recursion relations between the $N_{g,n}$ which we also call string and dilaton equations.  The first two of these are the string equations.
\begin{theorem}[String equations]   \label{th:string}
\[N_{g,n+1}(1,b_1,...,b_n)=\sum_{j=1}^n\sum_{k=1}^{b_j}kN_{g,n}(b_1,...,b_n)|_{b_j=k}\]
\[N_{g,n+1}(2,b_1,...,b_n)=\sum_{j=1}^n\sum_{k=1}^{b_j}kN_{g,n}(b_1,...,b_n)|_{b_j=k}-\frac{1}{2}\sum_{j=1}^nb_jN_{g,n}(b_1,...,b_n)\]
\end{theorem}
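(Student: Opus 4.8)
The plan is to deduce both identities from the string equation for the Eynard--Orantin invariants of the curve $xy-y^2=1$, using Theorem~\ref{th:main} to translate between $N_{g,n}$ and $\omega^{(g)}_n$. First I would recall the string equation in the form it takes for these invariants: it expresses $\omega^{(g)}_{n+1}(z_0,z_1,\dots,z_n)$, after an appropriate residue operation at the branch point (or a contour integral against a suitable kernel built from the curve), in terms of $\omega^{(g)}_n$ with one variable specialised. Concretely, the string equation contracts the extra variable $z_0$ against the meromorphic function $x$ (or $y$) associated to the curve; for $xy-y^2=1$ the relevant expansions near the branch point have an explicit rational form, so the residue can be evaluated in closed form.

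The key computational step is matching coefficients. Writing $F^{(g)}_n=\sum N_{g,n}(b_1,\dots,b_n)z_1^{b_1}\cdots z_n^{b_n}$ and $\omega^{(g)}_n=\partial_{z_1}\cdots\partial_{z_n}F^{(g)}_n\,dz_1\cdots dz_n$, I would extract the coefficient of $z_1^{b_1}\cdots z_n^{b_n}$ on both sides of the string equation. Setting the new ramification value equal to $1$ (respectively $2$) corresponds to extracting a particular term in the Laurent expansion of $\omega^{(g)}_{n+1}$ in its extra variable near the relevant point on the curve; the explicit parametrisation of $xy-y^2=1$ turns the residue into a finite sum. The operation $\sum_{k=1}^{b_j}k\,N_{g,n}(\dots)|_{b_j=k}$ should emerge as the coefficient expansion of the product of the generating function with the geometric-type series coming from $x$ near its pole, i.e.\ from an identity of the shape $\sum_{k\le b}k z^k=\big(z\frac{d}{dz}\big)\frac{z-z^{b+1}(\dots)}{1-z}$, evaluated so that only the polynomial part survives. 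The extra correction term $-\tfrac12\sum_j b_j N_{g,n}$ in the second identity arises because specialising to ramification $2$ picks up an additional contribution from the expansion (the second-order term), reflecting the $y^2$ in the curve equation and the factor $\tfrac12$ from the doubling at the branch point.

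I would organise the proof as: (i) write down the string equation specialised to this curve with its explicit local coordinates; (ii) apply $\partial_{z_1}\cdots\partial_{z_n}$-inversion to pass to generating functions $F^{(g)}_n$; (iii) expand the kernel in a power series and identify the coefficient of each monomial, obtaining the double sum $\sum_j\sum_{k=1}^{b_j}k(\cdots)$ for the $b=1$ case; (iv) repeat for $b=2$, carefully tracking the one extra term in the local expansion to produce the $-\tfrac12\sum_j b_j N_{g,n}$ correction. A small technical point is that these manipulations are a priori identities of meromorphic functions, and one must check that both sides are genuinely polynomial in the $b_i$ (which follows from the polynomiality statement quoted from \cite{NorCou}) so that equality of generating functions yields equality of the piecewise polynomials $N_{g,n}$ term by term; the parity constraints ($N_{g,n}$ vanishes unless $\sum b_i$ is even) are automatically respected.

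The main obstacle I anticipate is step (iii)/(iv): correctly evaluating the residue in the string equation against the explicit local data of $xy-y^2=1$ and recognising the resulting finite sum as exactly $\sum_{k=1}^{b_j}k\,N_{g,n}|_{b_j=k}$ rather than some twisted variant. In particular, getting the precise form of the extra term for ramification $2$ — both that it is $\sum_j b_j N_{g,n}$ (and not, say, $\sum_j b_j^2$) and that its coefficient is exactly $-\tfrac12$ — requires careful bookkeeping of the branch-point expansion and of the symmetry between the two sheets near the ramification point where $y=y^2$, i.e.\ $y=1$. Everything else is expected to be routine once the string equation is written in the right coordinates.
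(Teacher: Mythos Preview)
Your approach is essentially the same as the paper's: apply the Eynard--Orantin string equations \eqref{eq:string1} and \eqref{eq:string2} for the curve $xy-y^2=1$, translate via Theorem~\ref{th:main}, and match coefficients of $\prod b_iz_i^{b_i-1}dz_i$. The paper carries this out exactly as you outline, with the kernel expansion of $1/dx(z_j)=(z_j^2+z_j^4+\cdots)/dz_j$ (respectively $x(z_j)/dx(z_j)=(z_j+2z_j^3+2z_j^5+\cdots)/dz_j$) producing the sum $\sum_{k}kN_{g,n}|_{b_j=k}$ after the parity constraint kills half the terms.

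One step you leave implicit but which is the crux of the argument: the string equations are residues at the \emph{branch points} $z=\pm1$, while $N_{g,n+1}(1,\cdot)$ and $N_{g,n+1}(2,\cdot)$ are extracted as residues at $z=0$ (coefficients in the Taylor expansion). The bridge is the chain
\[
\res{z=0}\frac{1}{z^{b_0}}\omega^{(g)}_{n+1}=-\res{z=\infty}z^{b_0}\omega^{(g)}_{n+1}=\sum_{\alpha=\pm1}\res{z=\alpha}y(z)\omega^{(g)}_{n+1}\quad(b_0=1),
\]
which uses the inversion symmetry $\omega^{(g)}_{n+1}(1/z,\cdot)=-\omega^{(g)}_{n+1}(z,\cdot)$ of Lemma~\ref{th:inv} for the first equality, and the vanishing of the total residue (with $z^{b_0}$ replaced by $y$ or $xy$ up to terms analytic at $\infty$) for the second. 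Your proposal gestures at ``the explicit parametrisation turns the residue into a finite sum'' but does not name this inversion-plus-residue-theorem manoeuvre; without it the plan does not close.

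A small correction: the factor $\tfrac12$ in the second identity does not come from ``doubling at the branch point''. It comes from the mundane fact that $\res{z=0}z^{-2}\omega^{(g)}_{n+1}$ extracts $b_0N_{g,n+1}(b_0,\cdot)$ at $b_0=2$, i.e.\ $2N_{g,n+1}(2,\cdot)$, so one divides by $2$ at the end.
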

\noindent The string equations uniquely determine the genus 0 quasi-polynomials.

The count of branched covers $N_{g,n}(b_1,...,b_n)$ requires the $b_i$ to be positive integers.  Nevertheless the polynomials $N^{(k)}_{g,n}(b_1,...,b_n)$ can be evaluated at $b_i=0$ so we use them to define $N_{g,n}(b_1,...,b_n)$ when some of the $b_i=0$.   For example, 
\begin{equation}  \label{eq:euler}
N_{g,n}(0,0,...,0):=N^{(0)}_{g,n}(0,0,...,0)=\chi(\modm_{g,n})
\end{equation} 
for $\chi(\modm_{g,n})$ the (orbifold) Euler characteristic of $\modm_{g,n}$ was proven in \cite{NorCou}.   In this way the dilaton equation, below, is a relation between the quasi-polynomials defining $N_{g,n}$.  
\begin{theorem}[Dilaton equation]   \label{th:dilaton}
\[N_{g,n+1}(2,b_1,...,b_n)-N_{g,n+1}(0,b_1,...,b_n)=(2g-2+n)N_{g,n}(b_1,...,b_n)\]
\end{theorem}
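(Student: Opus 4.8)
The plan is to apply the dilaton equation of Eynard and Orantin (Section~\ref{sec:string}) to the curve $xy-y^2=1$ and then translate it, via Theorem~\ref{th:main}, into a relation among the $N_{g,n}$. Parametrise the curve by $z$ through $x=z+1/z$, $y=z$; the ramification points of $x$ are $z=\pm1$, and $y\,dx=(z-1/z)\,dz$ has primitive $\Phi(z)=\frac{z^2}{2}-\log z$. In the stable range $2g-2+n>0$ the dilaton equation reads
\[
\sum_{a=\pm1}{\rm Res}_{z=a}\,\Phi(z)\,\omega^{(g)}_{n+1}(z,z_1,\ldots,z_n)=(2g-2+n)\,\omega^{(g)}_n(z_1,\ldots,z_n),
\]
and the additive constant in $\Phi$ together with the multivaluedness of $\log$ are harmless, each residue being taken in a disc about $z=\pm1$ where $\log z$ is holomorphic.

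First I would substitute $\omega^{(g)}_{n+1}=\partial_z\partial_{z_1}\cdots\partial_{z_n}F^{(g)}_{n+1}\,dz\,dz_1\cdots dz_n$ from Theorem~\ref{th:main} and put $H(z)=\partial_{z_1}\cdots\partial_{z_n}F^{(g)}_{n+1}(z,z_1,\ldots,z_n)$, treating $z_1,\ldots,z_n$ as parameters. Integrating by parts in $z$ at each ramification point discards the exact differential $d(\Phi H)$ and turns $\sum_{a=\pm1}{\rm Res}_{z=a}\Phi\,dH$ into $-\sum_{a=\pm1}{\rm Res}_{z=a}H(z)(z-1/z)\,dz$. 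Now $\eta:=H(z)(z-1/z)\,dz$ is an honest meromorphic differential on $\mathbb{P}^1_z$; by Theorem~\ref{th:main} and the pole structure of Eynard--Orantin invariants (Section~\ref{sec:EO}) its poles lie only at $z=\pm1,0,\infty$. Since $F^{(g)}_{n+1}$ is a power series in $z$ vanishing at $z=0$ one checks ${\rm Res}_{z=0}\eta=0$, so the residue theorem on $\mathbb{P}^1_z$ gives $\sum_{a=\pm1}{\rm Res}_{z=a}\eta=-{\rm Res}_{z=\infty}\eta$. As $F^{(g)}_{n+1}$ is holomorphic at $z=\infty$, writing $H(z)=\sum_{k\ge0}H_kz^{-k}$ there and expanding gives ${\rm Res}_{z=\infty}\eta=H_0-H_2$. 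Assembling the pieces,
\[
(2g-2+n)\,\partial_{z_1}\cdots\partial_{z_n}F^{(g)}_n=H_0-H_2 .
\]

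It remains to identify $H_0$ and $H_2$, and the key ingredient is the functional equation
\[
F^{(g)}_{n+1}(z,z_1,\ldots,z_n)+F^{(g)}_{n+1}(1/z,z_1,\ldots,z_n)=-\sum_{b_1,\ldots,b_n}N_{g,n+1}(0,b_1,\ldots,b_n)\,z_1^{b_1}\cdots z_n^{b_n}.
\]
This follows from the polynomiality of $N_{g,n+1}(b,b_1,\ldots,b_n)$ in $b$: for any polynomial $Q$ one has $\sum_{b\ge1}Q(b^2)z^b+\sum_{b\ge1}Q(b^2)z^{-b}=-Q(0)$ as rational functions, and the even/odd-in-$z$ projections imposed by the parity constraint on the $b_i$ respect this (cf.\ Lemma~\ref{th:inv}); the constant $-Q(0)$ matches the polynomial-evaluation convention of \eqref{eq:euler}. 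Letting $z\to\infty$ and using $F^{(g)}_{n+1}(0,\cdot)=0$ identifies $H_0=-\partial_{z_1}\cdots\partial_{z_n}\sum N_{g,n+1}(0,b_1,\ldots,b_n)z_1^{b_1}\cdots z_n^{b_n}$, while substituting $z\mapsto1/z$ and reading off the coefficient of $z^{-2}$ identifies $H_2=-\partial_{z_1}\cdots\partial_{z_n}\sum N_{g,n+1}(2,b_1,\ldots,b_n)z_1^{b_1}\cdots z_n^{b_n}$. Substituting back, all three series in play have every $z_i$-exponent at least $1$, so $\partial_{z_1}\cdots\partial_{z_n}$ is injective on them and may be cancelled; comparing coefficients then yields $N_{g,n+1}(2,b_1,\ldots,b_n)-N_{g,n+1}(0,b_1,\ldots,b_n)=(2g-2+n)N_{g,n}(b_1,\ldots,b_n)$.

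The hard part will be the analysis at $z=\infty$: proving the functional equation and correctly matching $H_0$ and $H_2$ to the slices $b=0$ and $b=2$ demands care with the parity conventions that define $N_{g,n}$ at vanishing arguments, and one must check that neither the $\log z$ in $\Phi$ nor the poles of $y\,dx$ at $z=0,\infty$ disturb the dilaton equation or the integration by parts. Everything else is a routine residue computation.
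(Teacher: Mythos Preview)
Your argument is correct and follows essentially the same route as the paper: both start from the Eynard--Orantin dilaton equation, integrate by parts to trade $\Phi\,\omega^{(g)}_{n+1}$ for $(z-1/z)\,dz$ against the primitive $\int_0^z\omega^{(g)}_{n+1}$ (your $H$), move the residues from $\pm1$ to $\infty$ via the residue theorem, and then invoke the $z\mapsto 1/z$ behaviour of $F^{(g)}_{n+1}$ to read off the $b=0$ and $b=2$ slices. The only organisational difference is in the endgame: the paper splits $(z-1/z)\,dz$ into $z\,dz$ and $-dz/z$, integrates the first piece by parts once more to return to $\omega^{(g)}_{n+1}$, and then uses $\omega^{(g)}_{n+1}(1/z,\cdot)=-\omega^{(g)}_{n+1}(z,\cdot)$ to push that term to $z=0$ (yielding the $b=2$ term) while the second piece becomes $\int_0^\infty\omega^{(g)}_{n+1}$ (yielding the $b=0$ term); you instead keep $H$ intact and extract both coefficients directly from its Laurent expansion at $\infty$ via the sharpened functional equation $F(z)+F(1/z)=-\sum N_{g,n+1}(0,b_S)z_S^{b_S}$. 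Your version makes the role of the constant in Lemma~\ref{th:inv} explicit (the paper's assertion that $\int_0^\infty\omega^{(g)}_{n+1}$ has coefficients $N_{g,n+1}(0,b_S)$ is exactly this identification), and avoids the second integration by parts, but the content is the same.
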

\noindent Theorem~\ref{th:dilaton} together with (\ref{eq:euler}) generalises the relation between Euler characteristics
\[\chi(\modm_{g,n+1})=(2-2g-n)\chi(\modm_{g,n})\] 
which is a direct consequence of an exact sequence of mapping class groups.  See Section~\ref{sec:eval}.

The importance of the dilaton equation is that together with Theorem~\ref{th:main} it is used to prove Theorem~\ref{th:symp}.  The importance of the string and dilaton equations is that together they are used to give a counting problem interpretation of $N_{g,n}(b_1,...,b_n)$ when some, but not all, of the $b_i=0$.  This is crucial to extending the lattice point count to the compactified moduli space \cite{DNoCou}.  Theorems~\ref{th:string} and \ref{th:dilaton} follow from Theorem~\ref{th:main} and general properties of Eynard-Orantin invariants.  Purely combinatorial proofs of the string equations do exist whereas the dilaton equation cannot have a purely combinatorial proof---one cannot specify 0 ramification at a point above infinity so $N_{g,n+1}(0,b_1,...,b_n)$ is not {\em a priori} the solution of a counting problem (and likewise for $N_{g,n}(0,0,...,0)$.)   

Section~\ref{sec:back} describes $N_{g,n}(b_1,...,b_n)$ and $\omega^g_n(p_1,...,p_n)$.  Section~\ref{sec:rec} contains the proof of Theorem~\ref{th:main} and  Section~\ref{sec:string} contains the proofs of Theorems~\ref{th:string} and \ref{th:dilaton}. 
Section~\ref{sec:eval} contains vanishing results for $N_{g,n}(b_1,...,b_n)$ and the proof of Theorem~\ref{th:symp}.  Examples are given in Section~\ref{sec:example}.

{\em Acknowledgements.} The author would like to thank Bertrand Eynard, Nick Scott and the anonymous referee for useful comments.

\section{Background}  \label{sec:back}
In this section we give a short introduction to the two main ingredients of the paper---the quasi-polynomials $N_{g,n}(b_1,...,b_n)$, and the Eynard-Orantin invariants $\omega^g_n(p_1,...,p_n)$.

\subsection{Lattice point count.}

To enumerate branched covers $f:\Sigma\to\bp^1$ unramified over $\bp^1-\{0,1,\infty\}$ and satisfying the labeling and ramification conditions in the introduction, the main tool we use is the fatgraph, also known as ribbon graph or dessin d'enfant, given by $\Gamma=f^{-1}[0,1]\subset\Sigma$.  A {\em fatgraph} is a graph $\Gamma$ with vertices of valency $>2$ equipped with a cyclic ordering of edges at each vertex.  Equivalently a fatgraph is an isotopy class of embeddings of a graph into an orientable surface with complement a union of disks, so in particular it has genus and boundary.  The type $(g,n)$ of a fatgraph is its genus $g$ and number of boundary components $n$, which satisfy $2-2g-n=V(\Gamma)-E(\Gamma)$ where $V(\Gamma)$ and $E(\Gamma)$ are the number of vertices, respectively edges, of the graph $\Gamma$.  A {\em labeled} fatgraph has labeled boundary components.   The branched cover $f:\Sigma\to\bp^1$ is equivalent to its labeled fatgraph $\Gamma=f^{-1}[0,1]$ equipped with positive integer edge lengths.  Isomorphisms between labeled fatgraphs with positive integer edge lengths correspond to isomorphisms between (labeled) covers.  The length $b_k$ of a boundary component corresponds to the ramification of a point over $\infty$.

For a labeled fatgraph $\Gamma$ of type $(g,n)$ and $(b_1,...,b_n)\in\bz_+^n$ define $N_{\Gamma}(b_1,...,b_n)$ to be the number of ways of assigning positive integer lengths to the edges of $\Gamma$ such that the lengths around the labeled boundary components are $(b_1,...,b_n)$.  The discussion above shows that we can decompose the count of branched covers according to their labeled fatgraphs:
 \[ N_{g,n}(b_1,...,b_n)=\sum_{\Gamma\in \fat}\frac{1}{|Aut\ \Gamma|}N_{\Gamma}(b_1,...,b_n)\]
where the indexing set $\fat$ is the space of labeled fatgraphs of genus $g$ and $n$ boundary components.   (The automorphisms of $\Gamma$ act as isomorphisms between different assignments of positive integer edge lengths to $\Gamma$.)  It is proven in \cite{NorCou} that $N_{g,n}(b_1,...,b_n)$ is quasi-polynomial in the $b_i$ despite the fact that each $N_{\Gamma}(b_1,...,b_n)$ is only piecewise quasi-polynomial in the $b_i$.

Fatgraphs appear in another context.  Let $\modm_{g,n}$ be the moduli space of genus $g$ curves with $n$ labeled points.  The {\em decorated} moduli space $\modm_{g,n}\times\br^n_+$ equips the labeled points with positive numbers $(b_1,...,b_n)$ \cite{PenDec}.   It has a cell decomposition due to Penner, Harer, Mumford and Thurston 
\begin{equation}  \label{eq:cell}
\modm_{g,n}\times\br^n_+\cong\left(\bigsqcup_{\Gamma\in \fat}P_{\Gamma}\right)/\sim
\end{equation}
which is proven using the existence and uniqueness of meromorphic quadratic differentials with foliations having compact leaves, known as Strebel differentials which can be described via labeled fatgraphs with lengths on edges.  

The cell $P_\Gamma\cong\br_+^{E(\Gamma)}$ can be identified with all metrics on $\Gamma$, i.e. assign positive lengths on edges.   The gluing $\sim$ of cells in (\ref{eq:cell}) arises via identification of metrics on fatgraphs---when the length of an edge $l_E\to 0$ we identify this with the metric on the fatgraph with the edge $E$ contracted, and we also identify isometric metrics on fatgraphs (arising from isomorphisms of labeled fatgraphs.)  The fibre of the homeomorphism (\ref{eq:cell}) over a fixed $n$-tuple of positive numbers $(b_1,...,b_n)$ yields a space homeomorphic to $\modm_{g,n}$ decomposed into compact convex polytopes 
\[P_{\Gamma}(b_1,...,b_n)=\{{\bf x}\in\br_+^{E(\Gamma)}|A_{\Gamma}{\bf x}={\bf b}\}\]
where ${\bf b}=(b_1,...,b_n)$ and $A_{\Gamma}:\br^{E(\Gamma)}\to\br^n$ is the incidence matrix of $\Gamma$ that maps an edge to the sum of its two incident boundary components.  Equivalently $P_{\Gamma}(b_1,...,b_n)\subset P_{\Gamma}$ consists of all metrics on $\Gamma$ with boundary lengths $(b_1,...,b_n)$.  When the $b_i$ are positive integers the polytope $P_{\Gamma}(b_1,...,b_n)$ is a rational polytope (since $A_{\Gamma}$ has integer entries) which naturally contains the positive integer edge length fatgraphs corresponding to the branched covers described above.   In other words, $N_{\Gamma}(b_1,...,b_n)$ can be identified with the number of integer points in the convex polytope $P_{\Gamma}(b_1,...,b_n)$ and hence is referred to as a lattice point count.  

The top homogeneous degree terms of the polynomial $N_{g,n}^{(k)}$ ($k$ even) representing $N_{g,n}$ coincides with (2 times) Kontsevich's volume polynomial \cite{KonInt}
\[\displaystyle V_{g,n}(b_1,...,b_n)=\sum_{\Gamma\in \fat}\frac{1}{|Aut\ \Gamma|}V_{\Gamma}(b_1,...,b_n)\]
where $V_{\Gamma}(b_1,...,b_n)$ is the volume of $P_{\Gamma}(b_1,...,b_n)$ induced from the Euclidean volumes on $\br^{E(\Gamma)}$ and $\br^n$.  In particular the top homogeneous degree terms of the polynomials $N_{g,n}^{(k)}$ are independent of $k$ (for $k$ even.)\\

\noindent {\em Remark.} The identification of integer points in convex polytopes inside the moduli space with branched covers $f:\Sigma\to\bp^1$ unramified over $\bp^1-\{0,1,\infty\}$ and satisfying the labeling and ramification conditions in the introduction requires the deep results of existence and uniqueness of Strebel differentials.  The association of the branched cover with its fatgraph is more elementary and is all that is needed for the recursion (\ref{eq:rec1}) and hence for Theorems~\ref{th:main}, \ref{th:string} and \ref{th:dilaton}.   The branched covers are maps and have the advantage of generalising in ways that points in moduli space cannot.  Two directions of generalisation both involve counting extra maps to get counts consisting of $N_{g,n}(b_1,...,b_n)$ plus further positive terms.  In one direction, studied in \cite{EOrAlg}, one can drop the condition on points above $0$ i.e. allow ramification 1 there.  The count, $M_{g,n}(b_1,...,b_n)$, is no longer quasi-polynomial, but it is determined by $N_{g,n}(b_1,...,b_n)$.    In another direction, studied in \cite{DNoCou}, one can allow {\em stable} maps, i.e. allow the domain to be nodal, to get $\overline{N}_{g,n}(b_1,...,b_n)$ which is quasi-polynomial in the $b_i$.

\subsection{Eynard-Orantin invariants.}  \label{sec:EO}

Eynard and Orantin \cite{EOrInv} associate multidifferentials $\omega^g_n(C)$ to any Torelli marked Riemann surface $C$ equipped with two meromorphic functions $x$ and $y$ with the property that the zeros of $dx$ are simple and the map
\[ \begin{array}[b]{rcl} C&\to&\bc^2\\p&\mapsto& (x(p),y(p))\end{array}\]
is an immersion.  A {\em multidifferential} is a differential in each variable or equivalently $\omega^g_n(C)$ is a meromorphic section of $(T^*C)^{\otimes n}$ on $C^n=C\times C\times...\times C$. (We abuse notation and  write $T^*C$ for its pull-backs over $C^n$.)  A {\em Torelli marking} of $C$ is a choice of symplectic basis $\{a_i, b_i\}_{i=1,..,g}$ of the first homology group $H_1(\bar{C})$ of the compact closure $\bar{C}$ of $C$.

If $\omega$ is a meromorphic 1-form on $\overline{\bc}$ and analytic at $z_0$ then $\omega(z_0)$ can be expressed in terms of local information around the poles of $\omega$ using the Cauchy kernel as follows
\begin{align}  \label{eq:cauch}
\omega(z_0)&=f(z_0)dz_0=\res{z=z_0}\frac{f(z)dz}{z-z_0}dz_0
=\res{z=z_0}\frac{dz_0}{z-z_0}f(z)dz\\
&=\sum_{\alpha}\res{z=\alpha}\frac{dz_0}{z_0-z}\omega(z)\nonumber
\end{align}
where the sum is over all poles $\alpha$ of $\omega(z)$.  Similarly, the derivative of a meromorphic function $g$ on $\overline{\bc}$ can be expressed in terms of local information around the poles of $\alpha$ of $g$
\begin{equation*}
 g'(z)dz=\res{w=z}\frac{g(w)dwdz}{(w-z)^2}=-\sum_{\alpha}\res{w=\alpha}g(w)\frac{dwdz}{(w-z)^2}.
\end{equation*}

The expressions $\displaystyle\frac{dz_0}{z_0-z}\omega(z)$ and $\displaystyle\frac{dwdz}{(w-z)^2}$ are examples of {\em bidifferentials} which are meromorphic sections of $T^*\overline{\bc}\otimes T^*\overline{\bc}$ on $\overline{\bc}\times\overline{\bc}$.  The latter of these two generalises to a bidifferential on a Riemann surface $C$ of any genus defined as the meromorphic differential $\eta_w(z)dz$ unique up to scale which has a double pole at $w\in C$ and all $A$-periods vanishing.   The scale factor can be chosen so that $\eta_w(z)dz$ varies holomorphically in $w$, and transforms as a 1-form in $w$ and hence it is naturally expressed as the unique bidifferential on $C$ 
\[ B(w,z)=\eta_w(z)dwdz,\quad \oint_{A_i}B=0,\quad B(w,z)\sim\frac{dwdz}{(w-z)^2} {\rm\  near\ }w=z.\]  
It is symmetric in $w$ and $z$.  The bidifferential $B(w,z)$ is called the {\em Bergmann Kernel} in \cite{EOrInv} following \cite{TyuPer}.  It is called the fundamental normalised differential of the second kind on $C$ in \cite{FayThe}.  Recall that a meromorphic differential is {\em normalised} if its $A$-periods vanish and it is of the {\em second kind} if its residues vanish.

For every $(g,n)\in\bz^2$ with $g\geq 0$ and $n>0$ Eynard and Orantin \cite{EOrInv} define a multidifferential denoted by $\omega^g_n(p_1,...,p_n)$ for $p_i\in C$.  When $2g-2+n>0$, $\omega^g_n(p_1,...,p_n)$ is defined recursively in terms of local  information around the poles of $\omega^{(g')}_{n'}(p_1,...,p_{n'})$ for $2g'+2-n'<2g-2+n$.  This is closely related to (\ref{eq:cauch}) and its generalisation to any Riemann surface $C$ which expresses a normalised differential of the second kind in terms of local information around its poles using the kernel $\kappa_{z_0,p}(z)=\int_p^{z_0}B(z,z')$ on $C$.

For $2g-2+n>0$, the poles of $\omega^g_n(p_1,...,p_n)$ occur at the zeros of $dx$.  Since each zero $\alpha$ of $dx$ is simple, for any point $p\in C$ close to $\alpha$ there is a unique point $\hat{p}\neq p$ close to $\alpha$ such that $x(\hat{p})=x(p)$.  The recursive definition of $\omega^g_n(p_1,...,p_n)$ uses only local information around zeros of $dx$ and makes use of the well-defined map $p\mapsto\hat{p}$ there.

Set $\omega^{(0)}_1=ydx$ (which agrees with the convention in \cite{EOrTop} but disagrees with the convention in \cite{EOrInv}.)

\begin{equation} \label{eq:berg}
\omega^{(0)}_2=B(p_1,p_2)
\end{equation}
For $2g-2+n>0$,
\begin{equation}  \label{eq:EOrec}
\omega^g_{n+1}(p_1,p_S)=\sum_{\alpha}\hspace{-2mm}\res{p=\alpha}K(p_1,p)\hspace{-.5mm}\biggr[\omega^{(g-1)}_{n+2}(p,\hat{p},p_S)+\hspace{-5mm}\displaystyle\sum_{\begin{array}{c}_{g_1+g_2=g}\\_{I\sqcup J=S}\end{array}}\hspace{-5mm}
\omega^{(g_1)}_{|I|+1}(p,p_I)\omega^{(g_2)}_{|J|+1}(\hat{p},p_J)\biggr]
\end{equation}
where the sum is over zeros $\alpha$ of $dx$, $I=\{i_i,...,i_k\}\subset S=\{2,...,n+1\}$ and $J$ are non-empty, $p_I=(p_{i_1},...,p_{i_k})$ (where its use is independent of the order of elements in $I$) and 
\begin{equation}   \label{eq:kernel} 
K(p_1,p)=\frac{-\int^p_{\hat{p}}B(p_1,p')}{2(y(p)-y(\hat{p}))dx(p)}
\end{equation} 
is well-defined in the vicinity of each zero of $dx$.   Note that the quotient of a differential by the differential $dx(z)$ thought of as sections of the canonical line bundle is a meromorphic function.  The poles of $\omega^g_n(p_1,...,p_n)$ occur at the zeros of of $dx$, and they are of order $6g-4+2n$.

The recursion (\ref{eq:EOrec}) depends only on the meromorphic differential $ydx$ and the map $p\mapsto\hat{p}$ around zeros of $dx$.  The simplest example of a plane curve with non-trivial Eynard-Orantin invariants is $y^2=x$.  It is known as the Airy curve since the Eynard-Orantin invariants reproduce Kontsevich's generating function \cite{KonInt} for intersection numbers on the moduli space.  

The simplicity of the curve $y^2=x$ can be measured by the divisor of its differential $ydx$ which is $(ydx)=2(0)-4(\infty)$.  The plane curve $xy-y^2=1$ also has extremely simple divisor $(ydx)=(-1)+(1)-(0)-3(\infty)$ and is the focus of this paper.

\section{Recursion}  \label{sec:rec}
In \cite{NorCou} the quasi-polynomials $N_{g,n}(b_1,...,b_n)$ were shown to satisfy the following recursion which uniquely determines $N_{g,n}$ from $N_{0,3}$ and $N_{1,1}$.
\begin{multline}   \label{eq:rec1}
b_1N_{g,n+1}(b_1,b_S)=\sum_{j>1}\frac{1}{2}\left[\sum_{p+q=b_1+b_j}pqN_{g,n}(b_S)|_{b_j=p}+\epsilon\hspace{-4mm}\sum_{p+q=|b_1-b_j|}\hspace{-4mm}pqN_{g,n}(b_S)|_{b_j=p}\right]\\
+\frac{1}{2}\sum_{p+q+r=b_1} pqr\biggl[N_{g-1,n+2}(p,q,b_S)+\hspace{-4mm}\sum_{\begin{array}{c}_{g_1+g_2=g}\\_{I\sqcup J=S}\end{array}}\hspace{-4mm}N_{g_1,|I|+1}(p,b_I)N_{g_2,|J|+1}(q,b_J)\biggr].
\end{multline}
where $\epsilon={\rm sgn}(b_1-b_j)$ and $b_I=(b_{i_1},...,b_{i_k})$ for $I=\{i_i,...,i_k\}\subset S=\{2,...,n+1\}$.
 (A factor of $1/2$ is incorrectly missing from the formula in \cite{NorCou}.  I am indebted to Norman Do and Motohico Mulase who independently pointed this out.)

\begin{lemma}
For ${\rm w}^g_n(z_1,...,z_n)=\frac{\partial}{\partial z_1}...\frac{\partial}{\partial z_n}F^{(g)}_n$ the recursion (\ref{eq:rec1}) is equivalent to
\begin{multline}   \label{eq:recgen}
\quad\quad{\rm w}^g_{n+1}(z,z_S)=\sum_{j=1}^n\frac{\partial}{\partial z_j}\left\{\left(\frac{z^3}{(1-z^2)^2}{\rm w}^g_{n}(z_S)|_{z_j=z}-\frac{z_j^3}{(1-z_j^2)^2}{\rm w}^g_{n}(z_S)\right)\right.\\
\hspace{14mm}\times\left.\left(\frac{1}{z-z_j}+\frac{z_j}{1-zz_j}\right)\right\}\\
+\frac{z^3}{(1-z^2)^2}\left[{\rm w}^{(g-1)}_{n+2}(z,z,z_S)\right.+\hspace{-4mm}\sum_{\begin{array}{c}_{g_1+g_2=g}\\_{I\sqcup J=S}\end{array}}\hspace{-4mm}
\left.{\rm w}^{(g_1)}_{|I|+1}(z,z_I){\rm w}^{(g_2)}_{|J|+1}(z,z_J)
\right]
\end{multline}
\end{lemma}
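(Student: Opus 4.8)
The plan is to translate (\ref{eq:rec1}) into (\ref{eq:recgen}) by a generating-function transform whose inverse is coefficient extraction, so that the equivalence runs in both directions. Writing $z$ for $z_0$, multiply both sides of (\ref{eq:rec1}) by $z^{b_0-1}\prod_{i=1}^n b_iz_i^{b_i-1}$ and sum over all $(b_0,\dots,b_n)\in\bz_{>0}^{n+1}$. Since $b_0z^{b_0-1}=\partial_z z^{b_0}$, the left side becomes $\partial_z\partial_{z_1}\cdots\partial_{z_n}F^{(g)}_{n+1}={\rm w}^{(g)}_{n+1}(z,z_S)$; conversely, the coefficient of $z^{b_0-1}\prod b_iz_i^{b_i-1}$ in ${\rm w}^{(g)}_{n+1}$, and in each ${\rm w}^{(g')}_{n'}$ on the right of (\ref{eq:recgen}), recovers the relevant values of $N$. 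So it suffices to check that this transform carries the right side of (\ref{eq:rec1}) term by term to the right side of (\ref{eq:recgen}); all identities below are between formal power series, equivalently between functions analytic near the origin since $F^{(g)}_n$ converges there.

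First the two ``non-linear'' terms. Reindex $\sum_{p+q+r=b_0}pqr\,N_{g-1,n+2}(p,q,b_S)$ and the product term $\sum_{g_1+g_2=g,\,I\sqcup J=S}\sum_{p+q+r=b_0}pqr\,N_{g_1,|I|+1}(p,b_I)N_{g_2,|J|+1}(q,b_J)$ by the triple $(p,q,r)$, and split $z^{p+q+r-1}=z^{p-1}z^{q-1}z^{r+1}$. The $p$- and $q$-parts reassemble, again via $\partial_{z_1}\cdots\partial_{z_n}F^{(g')}_{n'}={\rm w}^{(g')}_{n'}$, into ${\rm w}^{(g-1)}_{n+2}(z,z,z_S)$ and ${\rm w}^{(g_1)}_{|I|+1}(z,z_I){\rm w}^{(g_2)}_{|J|+1}(z,z_J)$, while $r$ contributes only the series $\sum_r r\,z^{r+1}$. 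Since $N_{g,n}$ vanishes unless the number of odd arguments is even, on the support of (\ref{eq:rec1}) the parities of $b_0$ and of $p+q$ both equal that of $\sum_i b_i$, so $r$ is even; using $\sum_{k\ge1}k\,z^{2k+1}=z^3(1-z^2)^{-2}$ this reduces the $r$-sum to the factor $\frac{z^3}{(1-z^2)^2}$ multiplying the bracket in the last line of (\ref{eq:recgen}).

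The linear term carries the real content. Fix $j$ and set $c_p:=\sum_{b_i>0,\,i\ne j}N_{g,n}(b_S)\big|_{b_j=p}\prod_{i\ne j}b_iz_i^{b_i-1}$, so ${\rm w}^{(g)}_n(z_S)=\sum_p p\,c_p\,z_j^{p-1}$ and ${\rm w}^{(g)}_n(z_S)\big|_{z_j=z}=\sum_p p\,c_p\,z^{p-1}$. Transform $\tfrac12\sum_{p+q=b_0+b_j}pq\,c_p$ and $\tfrac12\sum_{p+q=b_0-b_j}pq\,c_p$ (the latter with the stated sign/absolute-value convention), reindex by $(p,q,b_j)$, and carry out the summations with $\sum_{m\ge1}m\,w^{m-1}=(1-w)^{-2}$ and its partial tails. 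Each $q$-sum contributes a $(1-z)^{-2}$; the $p$-summations rebuild ${\rm w}^{(g)}_n(z_S)\big|_{z_j=z}$ — and, from the complementary range where $p$ and $b_j$ exchange roles, ${\rm w}^{(g)}_n(z_S)$ itself; and each $b_j$-sum, with $b_j$ in both the weight $b_jz_j^{b_j-1}$ and a power of $z$, gives factors $\frac{z^2}{(1-z_jz)^2}$ and $\frac{z^2}{(z-z_j)^2}$, i.e. $\partial_{z_j}$ of $\frac{z_j}{1-z_jz}$ and of $\frac{1}{z-z_j}$. The point is that the two convolutions, glued by the sign rule, cancel on the diagonal $p+q=b_j$ and reorganise, after partial fractions, into exactly $\partial_{z_j}$ of
\[\left(\frac{z^3}{(1-z^2)^2}{\rm w}^{(g)}_n(z_S)\big|_{z_j=z}-\frac{z_j^3}{(1-z_j^2)^2}{\rm w}^{(g)}_n(z_S)\right)\left(\frac1{z-z_j}+\frac{z_j}{1-z z_j}\right);\]
summing over $j$ gives the first line of (\ref{eq:recgen}).

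I expect that last step — the diagonal cancellation and reassembly — to be the main obstacle: one must match the convolution constraints $p+q=b_0\pm b_j$ and the sign convention against the monomial weights precisely enough that the surviving expression is the \emph{antisymmetrised} rational function above, not merely some combination sharing its principal parts (which is what makes the $j$-sum holomorphic at $z=z_j$). Everything else is bookkeeping with $\sum_{m\ge1}m\,w^{m-1}=(1-w)^{-2}$ and partial fractions; extracting the coefficient of $z^{b_0-1}\prod_i b_iz_i^{b_i-1}$ from (\ref{eq:recgen}) returns (\ref{eq:rec1}), establishing the equivalence.
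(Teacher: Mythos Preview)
Your overall strategy---multiply by $z^{b_0-1}\prod_i b_i z_i^{b_i-1}$ and sum---is exactly the paper's, and your treatment of the nonlinear terms (parity forces $r$ even, hence the factor $z^3/(1-z^2)^2$) matches it verbatim. The linear term is where your sketch has a real gap, and you correctly flag it yourself.

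Two concrete issues. First, the same parity argument you used for $r$ applies to $q$ in the linear term: since $N_{g,n}(b_S)|_{b_j=p}$ vanishes unless $p+\sum_{i\ne j}b_i$ is even, and $b_0+\sum_i b_i$ is even on the support of (\ref{eq:rec1}), the constraint $p+q=b_0+b_j$ forces $q$ even. So the $q$-sum is $\sum_{q\ \text{even}}\tfrac{q}{2}w^q = w^2/(1-w^2)^2$, not $(1-w)^{-2}$ as you wrote; this is exactly what produces $z^3/(1-z^2)^2$ and $z_j^3/(1-z_j^2)^2$ in the linear piece.

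Second, your plan to sum $p$, $q$, $b_j$ separately and then repair via ``diagonal cancellation and partial fractions'' is both underspecified and unnecessarily hard. The paper's route avoids it entirely: for fixed $p,q$ with $M:=p+q$, pull $\partial_{z_j}$ outside and sum the \emph{constrained} pair $(b_0,b_j)$ first. For $b_0+b_j=M$ this is the finite sum $\sum_{b_0=1}^{M} z^{b_0-1}z_j^{M-b_0}=(z^M-z_j^M)/(z-z_j)$; for $|b_0-b_j|=M$ with the sign convention, the two geometric tails combine to $z_j(z^M-z_j^M)/(1-zz_j)$. The antisymmetrised numerator $z^{p+q}-z_j^{p+q}$ therefore appears \emph{directly}, with no cancellation to arrange. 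Then the even-$q$ sum gives $z^{p+2}/(1-z^2)^2 - z_j^{p+2}/(1-z_j^2)^2$, and the $p$-sum rebuilds ${\rm w}^{(g)}_n(z_S)|_{z_j=z}$ and ${\rm w}^{(g)}_n(z_S)$. Carrying this out replaces the paragraph you left unresolved.
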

\begin{proof}
Apply the operator
\[P=\sum_{b_1=1}^{\infty}z^{b_1-1}\prod_{i=2}^{n+1}\sum_{b_i=1}^{\infty}b_iz_i^{b_i-1}\]  
to both sides of (\ref{eq:rec1}). The left hand side transforms to ${\rm w}^g_{n+1}(z_1,...,z_{n+1})$.
For each $j=1,...,n+1$, define the operator 
\[ P_j=\prod_{\begin{array}{c}1\leq i\leq n+1\\ i\neq j\end{array}}\sum_{b_i=1}^{\infty}b_iz_i^{b_i-1}.\]  
Then the $j$th summand in the first term on the right hand side of (\ref{eq:rec1}) transforms under $P$ to
\[P_j\sum_{b_1=1}^{\infty}z^{b_1-1}\sum_{b_j=1}^{\infty}b_jz_j^{b_j-1}
\sum_{p+q=b_1+b_j}\frac{1}{2}pqN_{g,n}(b_S)|_{b_j=p}\hspace{24mm}\]
\begin{align*}&=P_j\frac{\partial}{\partial z_j}\sum_{\tiny
{\begin{array}{c}p,q\\ q{\rm\ even}\end{array}}}\hspace{-2mm}\frac{1}{2}pqN_{g,n}(b_S)|_{b_j=p}\frac{z^{p+q}-z_j^{p+q}}{z-z_j}\\
&=P_j\frac{\partial}{\partial z_j}\sum_ppN_{g,n}(b_S)|_{b_j=p}\left[\frac{z^{p+2}}{(1-z^2)^2}-\frac{z_j^{p+2}}{(1-z_j^2)^2}\right]\frac{1}{z-z_j}\\
&=\frac{\partial}{\partial z_j}\left(\frac{z^3}{(1-z^2)^2}{\rm w}^g_{n}(z_S)|_{z_j=z}-\frac{z_j^3}{(1-z_j^2)^2}{\rm w}^g_{n}(z_S)\right)
\frac{1}{z-z_j}.
\end{align*}
In the second line above, the sum is over even $q$ because this is already the case in (\ref{eq:rec1}) since the summands vanish when $q$ is odd.  The restriction $p+q=b_1+b_j$ is lifted by summing over all $b_j$, and the quotient $(z^{p+q}-z_j^{p+q})/(z-z_j)$ neatly encodes the sum of all monomials $z^az_j^b$ with $a+b=p+q-1$.  In the third line, we have simply replaced 
\[ \sum_{{\rm even\ }q>0}\frac{1}{2}qz^q=\frac{z^2}{(1-z^2)^2}\] and similarly for $z_j$.

The transform under $P$ of the $j$th summand of the second term on the right hand side of (\ref{eq:rec1}) breaks into two sums
\[P_j\sum_{b_1=1}^{\infty}z^{b_1-1}\left\{\sum_{b_j=0}^{b_1}\ -\sum_{b_j=b_1+1}^{\infty}\right\}\  b_jz_j^{b_j-1}\hspace{-4mm}
\sum_{p+q=|b_j-b_1|}\frac{1}{2}pqN_{g,n}(b_S)|_{b_j=p}\hspace{12mm}\]
\begin{align*}&=P_j\frac{\partial}{\partial z_j}\ z_j\hspace{-3mm}\sum_{\tiny
{\begin{array}{c}p,q\\ q{\rm\ even}\end{array}}}\hspace{-2mm}\frac{1}{2}pqN_{g,n}(b_S)|_{b_j=p}\frac{z^{p+q}-z_j^{p+q}}{1-zz_j}\\
&=P_j\frac{\partial}{\partial z_j}\sum_ppN_{g,n}(b_1,..,b_n)|_{b_j=p}\left[\frac{z^{p+2}}{(1-z^2)^2}-\frac{z_j^{p+2}}{(1-z_j^2)^2}\right]\frac{z_j}{1-zz_j}\\
&=\frac{\partial}{\partial z_j}\left(\frac{z^3}{(1-z^2)^2}{\rm w}^g_{n}(z_S)|_{z_j=z}-\frac{z_j^3}{(1-z_j^2)^2}{\rm w}^g_{n}(z_S)\right)
\frac{z_j}{1-zz_j}.
\end{align*}
The third and fourth terms of the right hand side of (\ref{eq:rec1}) transform under $P$ to
\[
P_1\sum_{b_1=1}^{\infty}z^{b_1-1}\hspace{-4mm}\sum_{p+q+r=b_1}\hspace{-2mm} \frac{pqr}{2}\biggr[N_{g-1,n+2}(p,q,b_S)+\hspace{-5mm}\sum_{\begin{array}{c}_{g_1+g_2=g}\\_{I\sqcup J=S}\end{array}}\hspace{-4mm}N_{g_1,|I|+1}(p,b_I)N_{g_2,|J|+1}(q,b_J)\biggr]\quad\quad\]
\begin{align*}
&=P_1\sum_{r{\rm\ even}}\frac{1}{2}rz^{r+1}\sum_{p,q}pq\biggr[N_{g-1,n+2}(p,q,b_S)\\
&\hspace{38mm}+\hspace{-4mm}\sum_{\begin{array}{c}_{g_1+g_2=g}\\_{I\sqcup J=S}\end{array}}\hspace{-4mm}N_{g_1,|I|+1}(p,b_I)N_{g_2,|J|+1}(q,b_J)\biggr]z^{p+q-2}\\
&=P_1\frac{z^3}{(1-z^2)^2}\sum_{p,q}pq\biggr[N_{g-1,n+2}(p,q,b_S)\\
&\hspace{36mm}+\hspace{-4mm}\sum_{\begin{array}{c}_{g_1+g_2=g}\\_{I\sqcup J=S}\end{array}}\hspace{-4mm}N_{g_1,|I|+1}(p,b_I)N_{g_2,|J|+1}(q,b_J)\biggr]z^{p+q-2}\\
&=\frac{z^3}{(1-z^2)^2}\left[{\rm w}^{(g-1)}_{n+2}(z,z,z_S)\right.+\hspace{-4mm}\sum_{\begin{array}{c}_{g_1+g_2=g}\\_{I\sqcup J=S}\end{array}}\hspace{-4mm}
\left.{\rm w}^{(g_1)}_{|I|+1}(z,z_I){\rm w}^{(g_2)}_{|J|+1}(z,z_J)
\right].
\end{align*}
Thus the Lemma is proven.
\end{proof}
The meromorphic form $\Omega^g_n(z_1,...,z_n)$ is defined via its Taylor expansion around $z_j=0$, $j=1,...,n$ with radius of convergence 1.  The following lemma gives an explicit analytic continuation of $\Omega^g_n(z_1,...,z_n)$ to $|z_j|>1$.
\begin{lemma}  \label{th:inv}
\[\Omega^g_n(z_1,...,1/z_j,...,z_n)=-\Omega^g_n(z_1,...,z_j,...,z_n)\]
\end{lemma}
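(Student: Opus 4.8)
The plan is to make the statement explicit by writing $\omega^{(g)}_n$ as a finite sum of decomposable tensors of single-variable $1$-forms, and then to read off the behaviour under $z_j\mapsto 1/z_j$ from a palindromy property of the power sums $\sum_{b\ge 1}b^m z^b$.

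Since $d(1/z_j)=-z_j^{-2}\,dz_j$, writing $\omega^{(g)}_n={\rm w}^{(g)}_n\,dz_1\cdots dz_n$ with ${\rm w}^{(g)}_n=\frac{\partial}{\partial z_1}\cdots\frac{\partial}{\partial z_n}F^{(g)}_n$, the assertion is equivalent to ${\rm w}^{(g)}_n(z_1,\dots,1/z_j,\dots,z_n)=z_j^{2}\,{\rm w}^{(g)}_n(z_1,\dots,z_n)$. By \cite{NorCou}, on each parity class the function $N_{g,n}(b_1,\dots,b_n)$ agrees with a polynomial in $b_1^2,\dots,b_n^2$. Inserting the parity projectors $\prod_i\frac{1}{2}(1+(-1)^{b_i+\epsilon_i})$ into $F^{(g)}_n=\sum_{b_i\ge1}N_{g,n}(b)\prod z_i^{b_i}$ and summing the resulting series one variable at a time expresses $F^{(g)}_n$ as a finite $\bc$-linear combination of products $\prod_{i=1}^nP_{2\mu_i}(s_iz_i)$ with $s_i\in\{1,-1\}$, where
\[ P_m(z):=\sum_{b\ge 1}b^m z^b=\Bigl(z\frac{d}{dz}\Bigr)^{m}\frac{z}{1-z}.\]
Consequently $\omega^{(g)}_n$ is, in each variable $z_j$, a finite combination of the $1$-forms $dP_{2\mu}(z_j)$ and $dP_{2\mu}(-z_j)$; in particular this reproves that $\omega^{(g)}_n$ extends meromorphically in each variable, with poles only at $z_j=\pm1$.

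The key point is then the single-variable identity $P_m(1/z)=(-1)^{m+1}P_m(z)$ for $m\ge 1$, together with $P_0(1/z)=-1-P_0(z)$: since the Euler operator $z\,d/dz$ anticommutes with the substitution $z\mapsto 1/z$, applying $(z\,d/dz)^m$ to $P_0(1/z)=-1-P_0(z)$ yields it at once. Differentiating, the $1$-form $dP_m(z)$ is anti-invariant under $z\mapsto 1/z$ exactly when $m$ is even; and only even exponents $m=2\mu$ occur, so each factor $dP_{2\mu}(z_j)$ appearing in $\omega^{(g)}_n$ satisfies $\sigma_j^{*}\,dP_{2\mu}(z_j)=-dP_{2\mu}(z_j)$, where $\sigma_j$ inverts the $j$-th coordinate, and the same holds for $dP_{2\mu}(-z_j)$ by applying the identity at $-z_j$ in place of $z$. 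Every other tensor factor is fixed by $\sigma_j$, hence $\sigma_j^{*}\omega^{(g)}_n=-\omega^{(g)}_n$, which is the lemma.

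The step needing care is this parity bookkeeping: the sign $(-1)^{m+1}$ in $P_m(1/z)=(-1)^{m+1}P_m(z)$ is precisely what forces one to use that the lattice-count polynomials are even in each $b_i$ — were an odd power of some $b_i$ present, $\omega^{(g)}_n$ would be invariant rather than anti-invariant in that slot. Alternatively one may induct on $2g-2+n$ from the recursion (\ref{eq:recgen}), verifying that each term on its right-hand side picks up the factor $z_j^{2}$ under $z_j\mapsto1/z_j$ (the distinguished variable being handled via the full permutation symmetry of $\omega^{(g)}_n$, inherited from that of $N_{g,n}$); but the computation above is shorter and delivers the meromorphic continuation as a by-product.
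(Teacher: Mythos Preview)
Your proof is correct and follows essentially the same route as the paper's: both reduce to a single-variable identity obtained by applying powers of the Euler operator $z\,d/dz$ to the base case $z/(1-z)$ (equivalently $z^\epsilon/(1-z^2)$), using that $N_{g,n}$ is polynomial in the $b_i^2$ on each parity class. The paper packages the parity restriction via $f_\pm(z)=\sum_{b\text{ even/odd}}p(b)z^b$ and establishes the anti-invariance at the level of $F^{(g)}_n$ (up to a $z_j$-independent term) before differentiating, whereas you use $P_{2\mu}(\pm z)$ and argue directly with the tensor factors of $\omega^{(g)}_n$; the underlying computation is identical.
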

\begin{proof}
If $p(n)=\sum_{j=0}^kp_jn^j$ is a polynomial then
\[\sum_{n>0}p(n)z^n=\sum_{j=0}^kp_j\sum_{n>0}n^jz^n=\sum_{j=0}^kp_j\left(z\frac{d}{dz}\right)^j\hspace{-2mm}\frac{z}{1-z}\]
is an expansion around $z=0$ of a holomorphic function with radius of convergence 1 which follows from the convergence of $z+z^2+...$ for $|z|<1$.

If we restrict the parity of $n$ then
\[
f_+(z)=\sum_{\begin{array}{c}n>0\\n{\rm\ even}\end{array}}\hspace{-3mm}p(n)z^n=\sum_{j=0}^kp_j\left(z\frac{d}{dz}\right)^j\hspace{-2mm}\frac{z^2}{1-z^2}\]
\[f_-(z)=\sum_{\begin{array}{c}n>0\\n{\rm\ odd}\end{array}}\hspace{-3mm}p(n)z^n=\sum_{j=0}^kp_j\left(z\frac{d}{dz}\right)^j\hspace{-2mm}\frac{z}{1-z^2}
\]
are meromorphic functions with poles at $z=\pm 1$.  If we further consider only polynomials in $n^2$, $p(n)=\sum_{j=0}^kp_{2j}n^{2j}$ then the extension to $|z|>1$ is explicitly given by 
\[ f_+(z)+f_+(1/z)=-p(0),\quad f_-(z)+f_-(1/z)=0\]
which follows immediately from
\[\frac{z^2}{1-z^2}+\frac{1/z^2}{1-1/z^2}=-1,\quad \frac{z}{1-z^2}+\frac{1/z}{1-1/z^2}=0\]
and
\[ \left(z\frac{d}{dz}\right)^{2j}=\left(w\frac{d}{dw}\right)^{2j},\quad w=1/z.\]
Hence
\[F^{(g)}_n(z_1,...,1/z_j,...,z_n)=-F^{(g)}_n(z_1,...,z_j,...,z_n)+c(z_1,...,\hat{z}_j,...,z_n)\]
where $\frac{\partial}{\partial z_j}c(z_1,...,\hat{z}_j,...,z_n)=0$.
Thus $\Omega^g_n=\frac{\partial}{\partial z_1}...\frac{\partial}{\partial z_n}F^{(g)}_ndz_1...dz_n$ satisfies 
\[ \Omega^g_n(z_1,...,1/z_j,...,z_n)=-\Omega^g_n(z_1,...,z_j,...,z_n).\]

\end{proof}

\begin{proof}[Proof of Theorem~\ref{th:main}.]
Rewrite (\ref{eq:recgen}) as follows
\begin{multline}   \label{eq:recgen1}
{\rm w}^g_{n+1}(z,z_S)=\frac{z^3}{(1-z^2)^2}\Biggr\{\sum_{j=1}^n\left[\frac{1}{(z-z_j)^2}+\frac{1}{(1-zz_j)^2}\right]{\rm w}^g_{n}(z_S)|_{z_j=z}\\
\hspace{40mm}+{\rm w}^{(g-1)}_{n+2}(z,z,z_S)+\hspace{-4mm}\sum_{\begin{array}{c}_{g_1+g_2=g}\\_{I\sqcup J=S}\end{array}}\hspace{-4mm}
{\rm w}^{(g_1)}_{|I|+1}(z,z_I){\rm w}^{(g_2)}_{|J|+1}(z,z_J)\Biggr\}\\
\quad\quad-\sum_{j=1}^n\frac{\partial}{\partial z_j}\frac{z_j^3}{(1-z_j^2)^2}{\rm w}^g_{n}(z_S)\left(\frac{1}{z-z_j}+\frac{z_j}{1-zz_j}\right)
\end{multline}
and note that the last term is analytic at $z=\pm 1$.
In terms of $\Omega^g_{n+1}(z,z_S)={\rm w}^g_{n+1}(z,z_S)dz_Sdz$, (\ref{eq:recgen1}) becomes
\begin{multline}   \label{eq:recgen2}
\Omega^g_{n+1}(z,z_S)=\frac{1}{(z-\frac{1}{z})dx(z)}\Biggr\{\sum_{j=1}^n\left[\frac{dzdz_j}{(z-z_j)^2}+\frac{dzdz_j}{(1-zz_j)^2}\right]\Omega^g_{n}(z_S)|_{z_j=z}\\
\hspace{40mm}+\Omega^{(g-1)}_{n+2}(z,z,z_S)+\hspace{-7mm}\sum_{\begin{array}{c}_{g_1+g_2=g}\\_{I\sqcup J=S}\\_{(g_1,|I|)\neq(0,1)}\\_{(g_2,|J|)\neq(0,1)}\end{array}}\hspace{-7mm}
\Omega^{(g_1)}_{|I|+1}(z,z_I)\Omega^{(g_2)}_{|J|+1}(z,z_J)\Biggr\}\\
\quad\quad-\sum_{j=1}^n\partial_{z_j}\frac{1}{(z_j-1/z_j)dx(z_j)}\Omega^g_{n}(z_S)\left(\frac{dz}{z-z_j}+\frac{z_jdz}{1-zz_j}\right)
\end{multline}
where various differentials have necessarily appeared on the right hand side and $\partial_{z_j}=\frac{\partial}{\partial z_j}\{\cdot\}dz_j$.
The terms
\[\frac{dzdz_j}{(z-z_j)^2}=\Omega^{(0)}_2(z,z_j),\quad\quad\frac{dzdz_j}{(1-zz_j)^2}=-\frac{d(1/z)dz_j}{(1/z-z_j)^2}=-\Omega^{(0)}_2(1/z,z_j)\]
can be absorbed into the sum over $g_1+g_2=g$ to give
\begin{multline}   \label{eq:recgen3}
\Omega^g_{n+1}(z,z_S)=\frac{-1}{(z-\frac{1}{z})dx(z)}\Biggr[\Omega^{(g-1)}_{n+2}(z,1/z,z_S)+\hspace{-5mm}\sum_{\begin{array}{c}_{g_1+g_2=g}\\_{I\sqcup J=S}\end{array}}\hspace{-5mm}
\Omega^{(g_1)}_{|I|+1}(z,z_I)\Omega^{(g_2)}_{|J|+1}(1/z,z_J)\Biggr]\\
\hspace{12mm}-\sum_{j=1}^n\partial_{z_j}\frac{1}{(z_j-1/z_j)dx(z_j)}\Omega^g_{n}(z_S)\left(\frac{dz}{z-z_j}+\frac{z_jdz}{1-zz_j}\right)
\end{multline}
and we have also used $\Omega^{(g')}_{n'}(z,z_I)=-\Omega^{(g')}_{n'}(1/z,z_I)$.

Apply (\ref{eq:cauch}) to $\Omega^g_{n+1}(z_1,z_S)$ to get
\begin{equation} \label{eq:proof}
\Omega^g_{n+1}(z_1,z_S)
=\sum_{\alpha=\pm 1}\res{z=\alpha}\frac{dz_1}{z_1-z}\Omega^g_{n+1}(z,z_S)
\end{equation}
We will substitute  (\ref{eq:recgen3}) into the right hand side of (\ref{eq:proof}) but first note that the last term of  (\ref{eq:recgen3}) can be dropped since it is analytic at $z=\pm 1$ hence does not contribute to the right hand side of (\ref{eq:proof}).  
\begin{multline}   \label{eq:recgen4}
\Omega^g_{n+1}(z_1,z_S)=\sum_{\alpha=\pm 1}\res{z=\alpha}\frac{-1}{(z-1/z)dx(z)}\frac{dz_1}{z_1-z}\Biggr\{\Omega^{(g-1)}_{n+2}(z,1/z,z_S)\\
+\hspace{-4mm}\sum_{\begin{array}{c}_{g_1+g_2=g}\\_{I\sqcup J=S}\end{array}}\hspace{-4mm}
\Omega^{(g_1)}_{|I|+1}(z,z_I)\Omega^{(g_2)}_{|J|+1}(1/z,z_J)\Biggr\}
\end{multline}
by symmetry under $z\mapsto 1/z$ 
\begin{multline}  \label{eq:recgen5}
\Omega^g_{n+1}(z_1,z_S)=\sum_{\alpha=\pm 1}\res{z=\alpha}\frac{1}{(z-1/z)dx(z)}\frac{dz_1}{z_1-1/z}\Biggr\{\Omega^{(g-1)}_{n+2}(z,1/z,z_S)\\
+\hspace{-4mm}\sum_{\begin{array}{c}_{g_1+g_2=g}\\_{I\sqcup J=S}\end{array}}\hspace{-4mm}
\Omega^{(g_1)}_{|I|+1}(z,z_I)\Omega^{(g_2)}_{|J|+1}(1/z,z_J)\Biggr\}.
\end{multline}
The recursion (\ref{eq:EOrec}) defining the Eynard-Orantin invariants for the curve $xy-y^2=1$ in terms of the parametrisation 
\[x(z)=z+1/z,\quad y(z)=z\] 
is given by
\[
\omega^g_{n+1}(z_1,z_S)\hspace{-.5mm}=\hspace{-2mm}\sum_{\alpha=\pm 1}\hspace{-2mm}\res{z=\alpha}K(z_1,z)\hspace{-.5mm}\biggr[\omega^{(g-1)}_{n+2}(z,1/z,z_S)+\hspace{-5mm}\displaystyle\sum_{\begin{array}{c}_{g_1+g_2=g}\\_{I\sqcup J=S}\end{array}}\hspace{-5mm}
\omega^{(g_1)}_{|I|+1}(z,z_I)\omega^{(g_2)}_{|J|+1}(1/z,z_J)\biggr]
\]
where $z_I=(z_{i_1},...,z_{i_k})$ for $I=\{i_i,...,i_k\}$ a non-empty subset of $S=\{2,...,n+1\}$ with non-empty complement, and we have used the fact that the zeros of $dx$ are $z=\pm 1$ and the map $z\mapsto\hat{z}=1/z$ is global.  

The kernel (\ref{eq:kernel}) for the curve $x(z)=z+1/z$, $y(z)=z$ is given by
\begin{align*}
 K(z_1,z)&=\frac{-\int^z_{\hat{z}}B(z_1,z')}{2(y(z)-y(\hat{z}))dx(z)}\\
&=\frac{-1}{2(z-1/z)dx(z)}\left(\frac{dz_1}{z_1-z}-\frac{dz_1}{z_1-1/z}\right)
\end{align*}
where we have used $B(w,z)=dwdz/(w-z)^2$ and $\hat{z}=1/z$.  This kernel appears in (\ref{eq:recgen4})+(\ref{eq:recgen5}) hence
\[
\Omega^g_{n+1}(z_1,z_S)\hspace{-.5mm}=\hspace{-2mm}\sum_{\alpha=\pm 1}\hspace{-2.5mm}\res{z=\alpha}\hspace{-.5mm}K(z_1,z)\hspace{-.5mm}\biggr[\Omega^{(g-1)}_{n+2}(z,1/z,z_S)+\hspace{-5mm}\displaystyle\sum_{\begin{array}{c}_{g_1+g_2=g}\\_{I\sqcup J=S}\end{array}}\hspace{-5mm}
\Omega^{(g_1)}_{|I|+1}(z,z_I)\Omega^{(g_2)}_{|J|+1}(1/z,z_J)\hspace{-.5mm}\biggr]
\]
which coincides with the recursion relation (\ref{eq:EOrec}) that defines $\omega^g_n$ for  $x(z)=z+1/z$, $y(z)=z$.  

It is easy to calculate $\Omega^{(0)}_3$ and $\omega^{(0)}_3$ since $N_{0,3}(b_1,b_2,b_3)$ is known (it equals 1 when $b_1+b_2+b_3$ is even and 0 when $b_1+b_2+b_3$ is odd.)  Hence
\[\Omega^{(0)}_3=\left\{\frac{1}{2\prod(1-z_i)^2}-\frac{1}{2\prod(1+z_i)^2}\right\}\prod dz_i=\omega^{(0)}_3\]
and together with the recursion relation this proves that
\[\Omega^g_n=\omega^g_n(C)\]
for $C=\{xy-y^2=1\}$ as required.
\end{proof}

\section{String and dilaton equations}  \label{sec:string}
The Eynard-Orantin invariants satisfy the following {\em string equations} \cite{EOrInv}.
\begin{equation}  \label{eq:string1}
\sum_{\alpha}\res{z=\alpha}y(z)\omega_{n+1}^g(z,z_S)=-\sum_{j=1}^ndz_j\frac{\partial}{\partial z_j}\left(\frac{\omega^g_n(z_S)}{dx(z_j)}\right)
\end{equation}
\begin{equation}  \label{eq:string2}
\sum_{\alpha}\res{z=\alpha}x(z)y(z)\omega_{n+1}^g(z,z_S)=-\sum_{j=1}^ndz_j\frac{\partial}{\partial z_j}\left(\frac{x(z_j)\omega^g_n(z_S)}{dx(z_j)}\right).
\end{equation}
where $z_S=(z_1,...,z_n)$ and the sum is over the zeros $\alpha$ of $dx$.
\subsection{String and dilaton equations for $N_{g,n}$.}  \label{sec:stringproof}
The string equations (\ref{eq:string1}) and (\ref{eq:string2})
transform to simple equations in the $N_{g,n}$.
\begin{proof}[Proof of Theorem~\ref{th:string}]
The quasi-polynomial
$N_{g,n+1}(1,b_1,...,b_n)$ is the coefficient of $\prod_{i=i}^nb_iz_i^{b_i-1}dz_i$ in the expansion of $\res{z=0}\displaystyle\frac{1}{z}\omega_{n+1}^g(z,z_S)$ around $(z,z_S)=0$.
\begin{align*}
\res{z=0}\frac{1}{z}\omega_{n+1}^g(z,z_S)
&=-\res{z=\infty}z\omega_{n+1}^g(z,z_S)\\
&=-\res{z=\infty}y(z)\omega_{n+1}^g(z,z_S)\\
&=\sum_{\alpha=\pm 1}\res{z=\alpha}y(z)\omega_{n+1}^g(z,z_S)\\
&=-\sum_{j=1}^ndz_j\frac{\partial}{\partial z_j}\left(\frac{\omega^g_n(z_S)}{dx(z_j)}\right)\\
&=\sum_{j=1}^n\frac{\partial}{\partial z_j}\left\{(z_j^2+z_j^4+z_j^6+...)\omega^g_n(z_S)\right\}
\end{align*}
The first equality uses $\omega_{n+1}^g(z,z_S)/z=-\omega_{n+1}^g(1/z,z_S)/z$.  The poles of the meromorphic form $y(z)\omega_{n+1}^g(z,z_S)$ occur at $z=-1,1,\infty$ so the third equality uses the zero sum over all residues.  The fourth equality is (\ref{eq:string1}).  We have expanded $1/dx(z_j)=1/(1-1/z_j^2)dz_j$ around $z_j=0$ to show that in the expansion of the final term around $z_S=0$, the coefficient of $\prod_{i=1}^nb_iz_i^{b_i-1}dz_i$ is the right hand side of the following
\begin{align*}
N_{g,n+1}(1,b_1,...,b_n)&=\sum_{j=1}^n\hspace{-.9cm}\sum_{\tiny{\begin{array}{c}k<b_j\\\quad\quad\quad k\equiv b_j+1({\rm mod}\ 2)\end{array}}}\hspace{-1cm}kN_{g,n}(b_1,...,b_n)|_{b_j=k}\\
&=\sum_{j=1}^n\sum_{k=1}^{b_j}kN_{g,n}(b_1,...,b_n)|_{b_j=k}
\end{align*}
where each summand with $k\equiv b_j({\rm mod}\ 2)$ vanishes since $k+\sum_{i\neq j}b_i$ is odd.  This proves the first recursion of Theorem~\ref{th:string}.\\

The quasi-polynomial 2$N_{g,n+1}(2,b_1,...,b_n)$ appears in the expansion around $z_S=0$ of $\res{z=0}\displaystyle\frac{1}{z^2}\omega_{n+1}^g(z,z_S)$ as the coefficient of $\prod_{i=1}^nb_iz_i^{b_i-1}dz_i$.
\begin{align*}
\res{z=0}\frac{1}{z^2}\omega_{n+1}^g(z,z_S)
&=-\res{z=\infty}z^2\omega_{n+1}^g(z,z_S)\\
&=-\res{z=\infty}x(z)y(z)\omega_{n+1}^g(z,z_S)\\
&=\sum_{\alpha=\pm 1}\res{z=\alpha}x(z)y(z)\omega_{n+1}^g(z,z_S)\\
&=-\sum_{j=1}^ndz_j\frac{\partial}{\partial z_j}\left(\frac{x(z_j)\omega^g_n(z_S)}{dx(z_j)}\right)\\
&=\sum_{j=1}^n\frac{\partial}{\partial z_j}\left\{(z_j+2z_j^3+2z_j^5+...)\omega^g_n(z_S)\right\}
\end{align*}
The first equality is as above.  The second equality replaces $z^2$ with $z^2+1=x(z)y(z)$ since $\omega_{n+1}^g(z,z_S)$ is analytic at $z=\infty$.  Again the poles of the meromorphic form $x(z)y(z)\omega_{n+1}^g(z,z_S)$ occur at $z=-1,1,\infty$ leading to the third equality. The fourth equality is (\ref{eq:string2}).  We have expanded $x(z_j)/dx(z_j)=(z_j-1/z_j)/(1-1/z_j^2)dz_j$ around $z_j=0$ to show that the coefficient of $\prod_{j=i}^nb_iz_i^{b_i-1}dz_i$ is the right hand side of 
\begin{align*}2N_{g,n+1}(2,b_1,...,b_n)&=2\sum_{j=1}^n\hspace{-1cm}\sum_{\tiny
{\begin{array}{c}k<b_j\\\quad\quad\quad k\equiv b_j({\rm mod}\ 2)\end{array}}}\hspace{-1cm}kN_{g,n}(b_1,...,b_n)|_{b_j=k}+\sum_{j=1}^nb_jN_{g,n}(b_1,...,b_n)\\
&=2\sum_{j=1}^n\sum_{k=1}^{b_j}kN_{g,n}(b_1,...,b_n)|_{b_j=k}-\sum_{j=1}^nb_jN_{g,n}(b_1,...,b_n)
\end{align*}
where each summand with $k\equiv b_j+1({\rm mod}\ 2)$ vanishes since $k+\sum_{i\neq j} b_i$ is odd.   This proves
the second recursion of Theorem~\ref{th:string}.
\end{proof}
\begin{cor}   \label{th:genus0}
The string equations determine the genus 0 invariants.
\end{cor}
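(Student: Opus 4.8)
The plan is to prove Corollary~\ref{th:genus0} by induction on $n$, with base case the given value $N_{0,3}(b_1,b_2,b_3)=1$ and the inductive step passing from $N_{0,n}$ to $N_{0,n+1}$ by means of Theorem~\ref{th:string}. Suppose every polynomial piece $N_{0,n}^{(k)}$ is known. Reading the first string equation as an identity in $(b_1,...,b_n)$, on a fixed parity class its right hand side $\sum_{j=1}^n\sum_{k=1}^{b_j}kN_{0,n}(b_1,...,b_n)|_{b_j=k}$ is a completely explicit function of the known pieces of $N_{0,n}$: the condition $N_{0,n}\neq 0$ restricts $k$ to one parity class, and a Faulhaber-type sum $\sum k\cdot k^{2\ell}$ over an arithmetic progression of step $2$ with endpoint $b_j$ is a polynomial in $b_j^2$. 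Hence $N_{0,n+1}(1,b_1,...,b_n)$ is determined as a piecewise polynomial in the $b_i^2$, and likewise $N_{0,n+1}(2,b_1,...,b_n)$ from the second string equation.

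The heart of the matter is that these two specialisations, together with the symmetry of $N_{0,n+1}$, determine it completely. Write $x_i=b_i^2$ and view a piece $N_{0,n+1}^{(k)}$ as a polynomial in $x_1,...,x_{n+1}$. For each admissible parity pattern the relevant integer loci are Zariski dense in the coordinate hyperplane containing them, so by the permutation symmetry of $N_{0,n+1}$ the first string equation, with a $1$ placed in each of the $n+1$ positions in turn, determines $N_{0,n+1}^{(k)}$ on every hyperplane $\{x_i=1\}$, and the second determines it on every $\{x_i=4\}$; when $k=0$ only the hyperplanes $\{x_i=4\}$ occur, when $k=n+1$ only the $\{x_i=1\}$, but in every case at least $n+1$ distinct coordinate hyperplanes are available. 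Since $N_{0,n+1}^{(k)}$ has total degree $n-2$ in the $x_i$ (the leading term of the lattice-point count is a volume of dimension $\dim\modm_{0,n+1}=n-2$; alternatively this bound propagates through the recursion, the Faulhaber step raising degrees by exactly one), the difference of any two polynomials of total degree $n-2$ agreeing on $n+1$ distinct coordinate hyperplanes $\{x_i=c_i\}$ is divisible by $\prod_{i=1}^{n+1}(x_i-c_i)$, of degree $n+1>n-2$, and hence is zero. Thus $N_{0,n+1}^{(k)}$ is uniquely determined, completing the induction. For contrast, the scheme fails in higher genus: the string equations reduce $N_{g,n+1}$ to $N_{g,n}$ but offer no initial datum at the bottom of the induction, and for small $n$ the degree $3g-3+n$ exceeds $n+1$.

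I expect the main obstacle to be the parity bookkeeping in the first step: since $k$ in $\sum_k kN_{0,n}|_{b_j=k}$ runs over both parities while only one contributes for a given choice of the remaining variables, one must check piece by piece that the right hand side is the restriction of the intended $N_{0,n+1}^{(k)}$, and that enough of the hyperplanes $\{x_i=1\}$ and $\{x_i=4\}$ are populated for each admissible $k$. The secondary point requiring care is the degree bound on $N_{0,n+1}^{(k)}$, which can be taken from \cite{NorCou} or tracked through the recursion above.
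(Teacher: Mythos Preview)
Your proposal is correct and follows essentially the same route as the paper's own proof: use the two string equations to pin down the polynomial piece $N_{0,n+1}^{(k)}$ on the coordinate hyperplanes $\{b_i^2=1\}$ for the $k$ odd slots and $\{b_i^2=4\}$ for the $n+1-k$ even slots, invoke the separate symmetry in odd and even variables to propagate from one slot to all, and then observe that a polynomial of total degree $n-2$ in the $b_i^2$ divisible by $\prod_{i\le k}(b_i^2-1)\prod_{j>k}(b_j^2-4)$ must vanish. The paper states exactly this divisibility-versus-degree argument; your Zariski-density phrasing and explicit Faulhaber remark are cosmetic elaborations of the same idea.
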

\begin{proof}
The string equations determine the genus 0 invariants.  If $F(b_1^2,...,b_n^2)$ is a polynomial satisfying:
\begin{enumerate}
\item deg $F(b_1^2,...,b_n^2)=n-3$ 
\item $F(b_1^2,...,b_n^2)$ is symmetric in $b_1,...,b_k$
\item $F(b_1^2,...,b_n^2)$ is symmetric in $b_{k+1},...,b_n$
\item $F(1,b_2^2,...,b_n^2)=N_{g,n}^{(k)}(1,b_2,...,b_n)$
\item $F(b_1^2,...,b_{n-1}^2,2^2)=N_{g,n}^{(k)}(b_1,b_2,...,b_{n-1},2)$
\end{enumerate}
then $F(b_1^2,...,b_n^2)=N_{g,n}^{(k)}(b_1,b_2,...,b_n)$ since 
\begin{align*}
F(b_1^2,...,b_n^2)-N_{g,n}^{(k)}(b_1,b_2,...,b_n)&=(b_1^2-1)(b_n^2-4)G(b_1^2,...,b_n^2)\\
&=\prod_{i=1}^k(b_i^2-1)\prod_{j=k+1}^n(b_j^2-4)H(b_1^2,...,b_n^2)
\end{align*}
which must vanish identically to have degree $\leq n-3$.  Thus $N_{g,n}^{(k)}(b_1,b_2,...,b_n)$ is uniquely determined by the two string equations.  If $k=0$ or $n$ then the argument is similar, although only one of the string equations is needed.
\end{proof}

The Eynard-Orantin invariants also satisfy the {\em dilaton equation.}
\begin{equation}  \label{eq:dil}
\sum_{\alpha}\res{z=\alpha}\Phi(z)\omega_{n+1}^g(z,z_S)=(2g-2+n)\omega_{n}^g(z_S)
\end{equation}
where $d\Phi=ydx$ and the sum is over the zeros $\alpha$ of $dx$.  The function $\Phi$ is well-defined up to a constant in a neighbourhood of each zero of $dx$ and the left hand side of (\ref{eq:dil}) is independent of the choice of constant.
\begin{proof}[Proof of Theorem~\ref{th:dilaton}]
The coefficient of $\prod_{i=1}^nb_iz_i^{b_i-1}dz_i$ in the right hand side of (\ref{eq:dil}) is $(2g-2+n)N_{g,n}(b_1,...,.b_n)$.  The left hand side of (\ref{eq:dil}) becomes:
\begin{align*}
\sum_{\alpha=\pm 1}\res{z=\alpha}\Phi(z)\omega_{n+1}^g(z,z_S)&=-\sum_{\alpha=\pm 1}\res{z=\alpha}d\Phi(z)\int_0^z\omega_{n+1}^g(z',z_S)\\
&=-\sum_{\alpha=\pm 1}\res{z=\alpha}(z-\frac{1}{z})dz\int_0^z\omega_{n+1}^g(z',z_S)\\
&=\res{z=\infty}(z-\frac{1}{z})dz\int_0^z\omega_{n+1}^g(z',z_S)\\
&=-\res{z=\infty}\frac{z^2}{2}\omega_{n+1}^g(z,z_S)-\res{z=\infty}\frac{dz}{z}\int_0^z\omega_{n+1}^g(z',z_S)\\
&=\res{z=0}\frac{1}{2z^2}\omega_{n+1}^g(z,z_S)-\res{z=\infty}\frac{dz}{z}\int_0^z\omega_{n+1}^g(z',z_S)
\end{align*}
The identity $0={\rm Res\ }d(fg)={\rm Res\ }df.g+{\rm Res\ }f.dg$ applies to $f=\Phi$ and $g=\int_0^z\omega_{n+1}^g$ even though $\Phi$ is a multiply-defined function.  This yields the first equality.  At $z=0$ the integral $\int_0^z\omega_{n+1}^g(z',z_S)$ vanishes so $(z-1/z)\int_0^z\omega_{n+1}^g(z',z_S)$ is analytic at $z=0$ and has poles at $z=\pm1$ and $\infty$ which leads to the third equality.  In the final expression, $N_{g,n+1}(2,b_1,...,b_n)$ is the coefficient of $\prod_{i=1}^nb_iz_i^{b_i-1}dz_i$ in the expansion of $\res{z=0}\displaystyle\frac{1}{2z^2}\omega_{n+1}^g(z,z_S)$ around $z_S=0$.  At $z=\infty$, $\int_0^z\omega_{n+1}^g(z',z_S)$ is analytic, thus $\res{z=\infty}\frac{dz}{z}\int_0^z\omega_{n+1}^g(z',z_S)=\int_0^{\infty}\omega_{n+1}^g(z',z_S)$  which has coefficient of $\prod_{i=1}^nb_iz_i^{b_i-1}dz_i$ given by $N_{g,n+1}(0,b_1,...,b_n)$.  Hence
\[N_{g,n+1}(2,b_1,...,b_n)-N_{g,n+1}(0,b_1,...,b_n)=(2g-2+n)N_{g,n}(b_1,...,b_n).\]
\end{proof}
\noindent {\em Remarks.} 1. It was necessary in the proofs of Theorems~\ref{th:string} and \ref{th:dilaton} that the $b_i>0$.  The equations immediately extend to allow all $b_i$.  For example, the dilaton equation implies the relationship between the {\em polynomials}
\[N_{g,n+1}^{(k)}(2,b_1,...,b_n)-N_{g,n+1}^{(k)}(0,b_1,...,b_n)=(2g-2+n)N_{g,n}^{(k)}(b_1,...,b_n)\]
for $b_i>0$.  The left hand side and right hand side are polynomials that agree at infinitely many values in each variable hence they coincide and in particular allow $b_i=0$.  If $b_j=0$ in the string equation then the sum on the right hand side corresponding to $j$ is empty.  Similarly, the main recursion (\ref{eq:rec1}) restricts to the polynomial parts of $N_{g,n}$ and hence also allows $b_i=0$. 

2. Around the zero of $dx$ given by $z=1$ (and similarly for $z=-1$), the curve $C$ given by
\[ x(z)=z+1/z=2+(z-1)^2+O(z-1)^3,\quad y(z)=1+(z-1)\]
resembles the Airy curve 
\[C^{{\rm Airy}}=\{x=z^2,\ y=z\}\] 
due to the simple branching of $x$.  Eynard and Orantin proved \cite{EOrTop} that near a zero of $dx$, in this case $z_i\approx 1$, $i=1,...,n$, the asymptotic behaviour of $\omega^g_n(C)$ is described by $\omega^g_n(C^{{\rm Airy}})$.   The asymptotic behaviour of $\omega^g_n(z_1,...,z_n)$ is governed by the top degree terms of the quasi-polynomial $N_{g,n}(b_1,...,b_n)$.  Since $\omega^g_n(C^{{\rm Airy}})$ give generating functions for intersection numbers on $\overline{\modm}_{g,n}$ \cite{EOrInv} this can be used to prove that the coefficients  of the top degree terms of the quasi-polynomial $N_{g,n}(b_1,...,b_n)$ are intersection numbers on $\overline{\modm}_{g,n}$.  This was proven in a different way in \cite{NorCou} by using the fact that the lattice point count approximates Kontsevich's volume of the moduli space which has coefficients intersection numbers on $\overline{\modm}_{g,n}$.  

3.  Let  $\modm_{g,n}({\bf L})$ be the moduli space of connected oriented genus $g$ hyperbolic surfaces with $n$ labeled geodesic boundary components of non-negative real lengths $L_1,...,L_n$.  It comes equipped with a symplectic form which gives rise to the Weil-Petersson volume $V^{WP}_{g,n}({\bf L})$.  Mirzakhani proved that $V^{WP}_{g,n}({\bf L})$ is a polynomial in ${\bf L}=(L_1,...,L_n)$ \cite{MirSim}. Eynard and Orantin \cite{EOrWei} proved that for $2g-2+n>0$
\[\omega^g_n(C^{WP})=\frac{\partial}{\partial z_1}...\frac{\partial}{\partial z_n}\mathcal{L}\{V^{WP}_{g,n}\}dz_1...dz_n\]
are the Eynard-Orantin invariants of the plane curve 
\[C^{WP}=\{x=z^2,y=-\sin(2\pi z)/4\pi\}\] 
which strictly represents a sequence of algebraic curves obtained by truncating the expansion for $y$ around $z=0$.
The string and dilaton equations applied to the Weil-Petersson volumes \cite{DNoWei} are
\begin{align*}
 V^{WP}_{g,n+1}({\bf L},2\pi i)&=\sum_{k=1}^n\int_0^{L_k}L_kV^{WP}_{g,n}({\bf L})dL_k\\
\frac{\partial^2 V^{WP}_{g,n+1}}{\partial L_{n+1}^2}({\bf L},2\pi i)&=\mathcal{E}\cdot V^{WP}_{g,n}({\bf L})-(4g-4+n)V^{WP}_{g,n}({\bf L})\\
\frac{\partial V^{WP}_{g,n+1}}{\partial L_{n+1}}({\bf L},2\pi i)&=2\pi i(2g-2+n)V^{WP}_{g,n}({\bf L}).
\end{align*}
where $\mathcal{E}=\sum_{j=1}^nL_j\partial/\partial L_j$ is the Euler vector field.  They have a nice geometric interpretation.   Evaluation of  $V^{WP}_{g,n+1}$ at $L_{n+1}=i\theta$ is the volume of the moduli space of hyperbolic surfaces with a cone point of angle $\theta$ when $\theta<\pi$ (and related to the volume when $\theta>\pi$.)  The string and dilaton equations give some information about the moduli spaces as the cone point tends to $2\pi$.

The string and dilaton equations satisfied by $N_{g,n}$ and $V^{WP}_{g,n}$ are strikingly similar, particularly if one substitutes $L_k=2\pi i b_k$ and uses the analogy of discrete integration and differentiation.  This suggests that the volume polynomials $V^{WP}_{g,n}$ may satisfy further identities similar to those satisfied by $N_{g,n}$ such as the vanishing results.  The dilaton equation leads to the symplectic invariant $F^{(g)}$ of Eynard-Orantin and it is interesting that in both the cases $F^{(g)}(C^{WP})$ and $F^{(g)}(C)$ turn out to be an invariant of the classical moduli space $\modm_g$---its volume and Euler characteristic respectively.  This suggests that the general symplectic invariant $F^{(g)}$ of Eynard-Orantin is somehow related to $\modm_g$.

\subsection{Tau notation}
Let $c^{(k)}_{\bf m}$ be the coefficient of $b_1^{2m_1}...b_n^{2m_n}$ in $N_{g,n}^{(k)}$ where the $b_i$ have been ordered so that the first $k$ are odd and the others are even.  Define
\begin{equation}  \label{eq:tau}
\langle\tau^-_{m_1}...\tau^-_{m_k}\tau^+_{m_{k+1}}...\tau^+_{m_n}\rangle_{g,n}:=2^{2|{\bf m}|-g}{\bf m}!(3g-3+n-|{\bf m}|)!\times c^{(k)}_{\bf m}
\end{equation}
where $|{\bf m}|=\sum_1^nm_i$ and ${\bf m}!=\prod_1^nm_i!$.
Since $N_{g,n}^{(k)}$ is symmetric in its odd variables and its even variables this tau notation encodes the entire polynomial, and we allow the $\tau_j^{\pm}$ to be written in any order.  If $k$ is odd or $|{\bf m}|>3g-3+n$ then the bracket vanishes.

The tau notation follows Witten's tau notation for intersection numbers \cite{WitTwo}.   The coefficients of the polynomials $N_{g,n}^{(k)}$ may be intersection numbers.  In particular, it was proven in \cite{NorCou} that when $|m|=3g-3+n$ it is an intersection number, 
\[ \langle\tau^-_{m_1}...\tau^-_{m_k}\tau^+_{m_{k+1}}...\tau^+_{m_n}\rangle_{g,n}=\langle\tau_{m_1}...\tau_{m_n}\rangle_{g,n}=\int_{\overline{\modm}_{g,n}}c_1(L_1)^{m_1}...c_1(L_n)^{m_n}\]
(independent of even $k$) where $L_1,...,L_n$, are tautological line bundles over ${\overline{\modm}_{g,n}}$.  

Put  $s=3g-3+n-|{\bf m}|$, $\tau^-_{\bf m}=\tau^-_{m_1}...\tau^-_{m_k}$ and $\tau^+_{\bf m}=\tau^+_{m_{k+1}}...\tau^+_{m_n}$.  The string equations become
\[ \sum_{p=0}^{s+1}\binom{s+1}{p}2^{-2p}\langle\tau^-_p\tau_{\bf m}^-\tau_{\bf m}^+\rangle_{g,n+1}=\sum_{p=0}^{s+1}\binom{s+1}{p}\sum_{j=1}^nb_{p,m_j}\langle\tau_{m_1}^-...\tau^{\mp}_{m_j+p-1}...\tau^+_{m_n}\rangle_{g,n}\]
\[ \sum_{p=0}^{s+1}\binom{s+1}{p}\langle\tau^-_p\tau_{\bf m}^-\tau_{\bf m}^+\rangle_{g,n+1}=\sum_{p=0}^{s+1}\binom{s+1}{p}\sum_{j=1}^nb'_{p,m_j}\langle\tau_{m_1}^-...\tau^{\pm}_{m_j+p-1}...\tau^+_{m_n}\rangle_{g,n}\]
where $b_{p,m_j}$ and $b'_{p,m_j}$ depend only on $p$ and $m_j$ and $\tau^{\mp}_{m_j+p-1}$ reverses the parity---replace $\tau^-_j$ (respectively $\tau^+_j$) with $\tau^+_{m_j+p-1}$ (respectively $\tau^-_{m_j+p-1}$)---and $\tau^{\pm}_{m_j+p-1}$ keeps the parity the same.  When $s=-1$, both string equations reduce to the usual string equation for intersection numbers on the moduli space of curves \cite{WitTwo}.  The tau notation gives a constructive proof of Corollary~\ref{th:genus0}, which states that the string equations determine the genus zero invariants, since the system of equations is triangular in the genus 0 invariants.

The dilaton equation becomes
\[\frac{1}{s+1}\sum_{m_0=1}^{s+1}\binom{s+1}{m_0}\langle\tau^+_{m_0}\tau^-_{\bf m}\tau^+_{\bf m}\rangle_{g,n+1}=(2g-2+n)\langle\tau^-_{\bf m}\tau^+_{\bf m}\rangle_{g,n}.\]
When $s=-1$, the dilaton equation reduces to the usual dilaton equation for intersection numbers on the moduli space of curves \cite{WitTwo}.  It can be used to determine the genus 1 invariants.

\section{Evaluation at $b_j=0$.}   \label{sec:eval}

The count of branched covers $N_{g,n}(b_1,...,b_n)$ requires the $b_i$ to be positive integers since ramification 0 makes no sense.  We can define $N_{g,n}(b_1,...,b_n)$ for some $b_j=0$ by evaluation of its representing polynomial $N^{(k)}_{g,n}(b_1,...,b_n)$.  Using $N_{g,n}(b_1,...,b_n)$ with some $b_i=0$, one can define a compactified count of lattice points by compactifying the moduli space \cite{DNoCou}.  It gives rise to a polynomial with constant term the Euler characteristic of the compactified moduli space.  The dilaton equation from Theorem~\ref{th:dilaton} 
\[N_{g,n+1}(0,b_1,...,b_n)=N_{g,n+1}(2,b_1,...,b_n)-(2g-2+n)N_{g,n}(b_1,...,b_n)\]
enables us to make sense of evaluation at $b_j=0$ in terms of a counting problem.  Furthermore, as explained in the remark at the end of Section~\ref{sec:stringproof} the string and dilaton equations still hold when some $b_j=0$ and this enables us to prove vanishing results when some $b_j=0$.

\begin{lemma}[\cite{NorCou}]   \label{th:van}
If $\displaystyle\sum_{i=1}^nb_i\leq 4g-4+2n$ then $N_{g,n}(b_1,...,b_n)=0$ when all $b_i>0$.
\end{lemma}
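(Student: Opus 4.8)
The plan is to deduce the vanishing directly from the polytope description of $N_{g,n}$ recalled in Section~\ref{sec:back}, reducing the statement to an elementary counting inequality for fatgraphs. Since $N_{g,n}(b_1,\dots,b_n)=\sum_{\Gamma}\frac{1}{|{\rm Aut}\,\Gamma|}N_{\Gamma}(b_1,\dots,b_n)$ with every summand non-negative, it suffices to show that for each labelled fatgraph $\Gamma$ of genus $g$ with $n$ boundary components the polytope $P_\Gamma(b_1,\dots,b_n)$ contains no positive integer point, i.e.\ no ${\bf x}\in\bz_{>0}^{E(\Gamma)}$ with $A_\Gamma{\bf x}={\bf b}$.

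The first key step is a perimeter identity. Summing the $n$ equations $A_\Gamma{\bf x}={\bf b}$ over the boundary components gives $\sum_{i=1}^n b_i = 2\sum_{e\in E(\Gamma)}x_e$, because each edge $e$ borders exactly two boundary sides and contributes its length $x_e$ to each of them; equivalently, this is the statement that the incidence matrix $A_\Gamma$ sends an edge to the sum, with multiplicity, of its two incident boundary components, so loops and edges meeting one face twice are automatically accounted for. Since $x_e\ge 1$ for all $e$, the existence of a positive integer point forces $\sum_{i=1}^n b_i\ge 2\,E(\Gamma)$.

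The second step bounds $E(\Gamma)$ from below. Regarding $\Gamma$ as a graph cellularly embedded in a closed genus $g$ surface with $n$ faces, Euler's relation reads $V(\Gamma)-E(\Gamma)+n=2-2g$, so $E(\Gamma)=V(\Gamma)+2g-2+n$. The fatgraphs that index the cells lie in the stable range $2g-2+n>0$ and are therefore nonempty, so $V(\Gamma)\ge 1$ and hence $E(\Gamma)\ge 2g-1+n$. Combining the two steps, a positive integer point of $P_\Gamma(b_1,\dots,b_n)$ would force $\sum_{i=1}^n b_i\ge 2(2g-1+n)=4g-2+2n$, contradicting the hypothesis $\sum_{i=1}^n b_i\le 4g-4+2n$. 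Hence $N_\Gamma(b_1,\dots,b_n)=0$ for every $\Gamma$, and so $N_{g,n}(b_1,\dots,b_n)=0$.

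I expect no serious obstacle; the only point needing care is the perimeter identity when $\Gamma$ has loops or an edge bordering a single face twice, and this is settled by the definition of $A_\Gamma$. One can also note that the argument is essentially sharp: $\sum b_i=2\sum_e x_e$ is always even, and a one-vertex fatgraph of genus $g$ with $n$ boundary components and all edge lengths $1$ has the minimal $2g-1+n$ edges and realises $\sum b_i=4g-2+2n$ with nonzero count, so the bound $4g-4+2n$ in the statement cannot be increased.
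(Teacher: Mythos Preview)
Your argument is correct. It is, however, not the route the paper takes: the paper argues on the branched-cover side, assuming $N_{g,n}(b_1,\dots,b_n)>0$, picking a corresponding cover $\pi:C\to S^2$, and using Riemann--Hurwitz to obtain $2-2g-n=-\tfrac{1}{2}\sum b_i+\#\pi^{-1}(0)>-\tfrac{1}{2}\sum b_i$, hence $\sum b_i>4g-4+2n$. You instead stay on the fatgraph/polytope side of the dessin d'enfant correspondence from Section~\ref{sec:back}: the perimeter identity $\sum b_i=2\sum_e x_e$ together with $x_e\ge 1$ and Euler's relation $V-E+n=2-2g$ yields $\sum b_i\ge 2E\ge 4g-2+2n$. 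The two arguments are really the same Euler-characteristic count read through the dictionary (preimages of $0\leftrightarrow$ vertices, preimages of $1\leftrightarrow$ edges, preimages of $\infty\leftrightarrow$ faces); your version has the advantage of being entirely elementary and self-contained in the polytope language used elsewhere in the paper, while the paper's version is shorter and makes the role of $\#\pi^{-1}(0)\ge 1$ explicit, which is what later motivates the interpretation in Remark~1 after Corollary~\ref{th:van2}. Your closing sharpness remark is a nice addition not present in the paper.
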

\begin{proof}
If $N_{g,n}(b_1,...,b_n)>0$, there exists a degree $\sum b_i$ genus $g$ branched cover $\pi:C\to S^2$ branched over $0$, $1$ and $\infty$ with ramification $(b_1,...,b_n)$ over $\infty$ and ramification $(2,2,...,2)$ over 1.  By the Riemann-Hurwitz formula, 
\[\chi(\pi^{-1}(S^2-\{0,\infty\}))=-\frac{1}{2}\sum_{i=1}^nb_i.\]  
Thus
\[ 2-2g-n=\chi(\pi^{-1}(S^2-\{\infty\}))=-\frac{1}{2}\sum_{i=1}^nb_i+\#\pi^{-1}(0)>-\frac{1}{2}\sum_{i=1}^nb_i.\]
\end{proof}
Using the dilaton equation we can extend the vanishing result to allow some $b_j$ to be 0.
\begin{cor}   \label{th:van2}
If $0<\displaystyle\sum_{i=1}^nb_i\leq4g-4+2(n-p)$ then $N_{g,n}(b_1,...,b_n)=0$ where $p=\#\{b_i=0\}$. 
\end{cor}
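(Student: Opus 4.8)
The plan is to induct on the number $p$ of vanishing $b_i$, using Lemma~\ref{th:van} as the base case $p=0$ and the dilaton equation of Theorem~\ref{th:dilaton} to pass from $p$ to $p+1$. So suppose the claim holds for all tuples with fewer than $p$ zero entries, and let $(b_1,\dots,b_n)$ have exactly $p\geq 1$ zeros, say $b_n=0$, with $0<\sum_{i=1}^n b_i\leq 4g-4+2(n-p)$. Since $b_n=0$ we may write $N_{g,n}(b_1,\dots,b_{n-1},0)$ and apply the dilaton equation in the form $N_{g,n}(b_1,\dots,b_{n-1},0)=N_{g,n}(b_1,\dots,b_{n-1},2)-(2g-2+(n-1))N_{g,n-1}(b_1,\dots,b_{n-1})$, where I have relabelled so that the zero entry plays the role of the distinguished first slot (legitimate by the symmetry of $N_{g,n}^{(k)}$ under parity-preserving permutations, $0$ being even).

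First I would handle the second term on the right. The tuple $(b_1,\dots,b_{n-1})$ has exactly $p-1$ zeros among its $n-1$ entries, and its entry-sum is still $\sum_{i=1}^{n-1}b_i=\sum_{i=1}^n b_i\in(0,\,4g-4+2(n-p)]=(0,\,4g-4+2((n-1)-(p-1))]$, so the inductive hypothesis gives $N_{g,n-1}(b_1,\dots,b_{n-1})=0$. Next I would handle the first term. Now the tuple $(b_1,\dots,b_{n-1},2)$ has exactly $p-1$ zeros among $n$ entries, and its sum is $\sum_{i=1}^n b_i+2\leq 4g-4+2(n-p)+2=4g-4+2(n-(p-1))$; moreover this sum is positive. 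Hence the inductive hypothesis again applies and $N_{g,n}(b_1,\dots,b_{n-1},2)=0$. Combining the two vanishings with the dilaton equation yields $N_{g,n}(b_1,\dots,b_n)=0$, completing the induction.

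I expect the only real point requiring care is the book-keeping of the two inequalities under the dilaton step: replacing a zero entry by a $2$ raises the sum by $2$ but also lowers $p$ by $1$, so the bound $4g-4+2(n-p)$ goes up by exactly $2$ as well, and the induction is consistent; the deletion of an entry lowers both $n$ and $p$ by $1$, leaving $4g-4+2(n-p)$ unchanged, which again matches. The hypothesis $\sum b_i>0$ is needed precisely so that the relevant tuples are not the all-zero tuple (for which $N_{g,n}$ equals the Euler characteristic and need not vanish), and one should check it is preserved: in the deletion term it is preserved because the deleted entry is $0$, and in the substitution term it is preserved a fortiori since we add $2$. A small remark should note that the dilaton equation is being used in its polynomial extension (the Remark following the proof of Theorem~\ref{th:dilaton}), which is what licenses evaluating at $b_n=0$ in the first place.
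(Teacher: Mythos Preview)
Your proof is correct and follows essentially the same approach as the paper's: induction on the number $p$ of zero entries with Lemma~\ref{th:van} as the base case, then the dilaton equation to reduce both terms to instances with $p-1$ zeros, checking that the inequality and the positivity of the sum are preserved in each branch. Your explicit remarks on the book-keeping and on using the polynomial extension of the dilaton equation are exactly the points the paper relies on.
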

\begin{proof}
The case $p=0$ is Lemma~\ref{th:van} and begins the inductive argument on $p$ where we allow any $n$.
Suppose $b_{n+1}=0$, hence $p=\#\{b_i=0\}\geq 1$ and $\{b_1,...,b_n\}$ contains $p-1$ zeros.   Assume 
\[0<\displaystyle\sum_{i=1}^{n+1}b_i\leq4g-4+2(n+1-p).\]   
On the right hand side of
\[N_{g,n+1}(0,b_1,...,b_n)=N_{g,n+1}(2,b_1,...,b_n)-(2g-2+n)N_{g,n}(b_1,...,b_n)\]
the first term vanishes by an inductive hypothesis since  $\#\{b_i=0\}=p-1$ and  
\[0<2+\displaystyle\sum_{i=1}^nb_i\leq4g-2+2(n+1-p)=4g-4+2(n+1-(p-1)).\]
The second term on the right hand side also vanishes by the inductive hypothesis since $\#\{b_i=0\}=p-1$ and
\[0<\displaystyle\sum_{i=1}^nb_i\leq4g-4+2(n+1-p)=4g-4+2(n-(p-1))\]
completing the induction.
\end{proof}
\noindent {\em Remarks.} 1. The vanishing result of Lemma~\ref{th:van} uses $\sum b_i\leq-2\chi$ where $\chi$ is the Euler characteristic of the cover.  If we try to interpret Corollary~\ref{th:van2} in a similar way, then we are led to the idea that the Euler characteristic should be $2-2g-(n-p)$ in place of $2-2g-n$ as if setting $p$ of the $b_i$ to be zero {\em removes $p$ punctures.}

2.  The Euler characteristic arguments of Lemma~\ref{th:van} and Corollary~\ref{th:van2} cannot detect connectedness of the cover suggesting there may be further vanishing results.  This is indeed the case for genus 0 shown in the following lemma.
\begin{lemma}  \label{th:van3}
If $0<\displaystyle\sum_{i=1}^nb_i\leq2(n-3)$ then $N_{0,n}(b_1,...,b_n)=0$.
\end{lemma}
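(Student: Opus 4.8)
The plan is to induct on $n$ using the main recursion (\ref{eq:rec1}), read now as a polynomial identity valid for all $b_i\ge 0$ (as noted in the remark following Theorem~\ref{th:dilaton}); the extra mileage over the Euler-characteristic bounds comes from the parity vanishing $N_{g,m}(c_1,\dots,c_m)=0$ whenever $\sum_i c_i$ is odd. Since $2(n-3)=0$ for $n=3$ the case $n=3$ is vacuous; assume the lemma for all moduli spaces with fewer marked points and take a tuple $(c_0,c_1,\dots,c_{n-1})$ with $0<\sum_{i=0}^{n-1}c_i\le 2(n-3)$ — we may as well assume $\sum_i c_i$ is even (otherwise $N_{0,n}$ vanishes trivially) and, since $N_{0,n}$ is a symmetric function of its integer arguments, that $c_0>0$. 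Feed $c_0$ into (\ref{eq:rec1}) as the distinguished variable; in genus $0$ the $N_{g-1,n+2}$ term is absent, so writing $c_S=(c_1,\dots,c_{n-1})$ we must show that every term on the right-hand side of
\begin{multline*}
c_0N_{0,n}(c_0,c_S)=\sum_{j=1}^{n-1}\frac12\Big[\sum_{p+q=c_0+c_j}pq\,N_{0,n-1}(c_S)|_{c_j=p}+\sum_{p+q=c_0-c_j}pq\,N_{0,n-1}(c_S)|_{c_j=p}\Big]\\
+\sum_{p+q+r=c_0}pqr\sum_{I\sqcup J=\{1,\dots,n-1\}}N_{0,|I|+1}(p,c_I)\,N_{0,|J|+1}(q,c_J)
\end{multline*}
vanishes; here the product sum ranges only over stable pairs, the unstable terms $N_{0,1}$, $N_{0,2}$ not occurring.

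For a string term, $p,q\ge 1$ forces $p$ to be at least one less than the relevant value of $c_0\pm c_j$, so the entries of $N_{0,n-1}(c_S)|_{c_j=p}$ have (positive) total at most $\big(\sum_i c_i\big)-1\le 2(n-3)-1=2\big((n-1)-3\big)+1$; hence either this total is $\le 2\big((n-1)-3\big)$ and the inductive hypothesis gives $0$, or it equals $2\big((n-1)-3\big)+1$, an odd number, and the parity vanishing gives $0$. For a product term, if it is nonzero then both factors are nonzero, so by the inductive hypothesis applied to $N_{0,|I|+1}$ and $N_{0,|J|+1}$ (both involving fewer than $n$ marked points) we get $p+\sum_Ic_i\ge 2|I|-3$ and $q+\sum_Jc_j\ge 2|J|-3$; adding these and using $p+q=c_0-r\le c_0-1$ gives $\sum_ic_i\ge 2(n-3)-1$, hence $\sum_ic_i=2(n-3)$ and $r\in\{1,2\}$. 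In these boundary cases the lower bounds $2|I|-3$, $2|J|-3$ are met with combined excess at most $1$, so at least one of $p+\sum_Ic_i$, $q+\sum_Jc_j$ equals its lower bound, which is odd, whence by the parity vanishing the corresponding factor is $0$ and the product is $0$. Thus the right-hand side vanishes, $c_0N_{0,n}(c_0,c_S)=0$, and $c_0>0$ gives $N_{0,n}(c_0,c_S)=0$, completing the induction.

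The crux is the boundary $\sum_ic_i=2(n-3)$: a pure count of homogeneous degrees (equivalently Corollary~\ref{th:van2}, or the dilaton equation used alone as in its proof) falls short there by one parity step, and the point is that every term of (\ref{eq:rec1}) which that count fails to kill has an argument tuple of odd total, so the parity vanishing finishes it. This is exactly what is special to genus $0$, where the handle-gluing term $N_{g-1,n+2}$ of the recursion does not intervene. The remaining items are routine: the reduction to $c_0>0$ and to $\sum_i c_i$ even using the symmetry and parity vanishing of $N_{0,n}$, the legitimacy of evaluating (\ref{eq:rec1}) at arguments equal to $0$ (granted by the remark following Theorem~\ref{th:dilaton}), and the exclusion of the unstable contributions $N_{0,1}$, $N_{0,2}$ from the product sum.
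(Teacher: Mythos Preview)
Your proof is correct and follows essentially the same inductive route as the paper: feed the genus-$0$ recursion (\ref{eq:rec1}) and kill each summand using parity vanishing together with the inductive hypothesis. The only organisational difference is that the paper applies parity first---observing that a nonvanishing summand forces $q$ (respectively $r$) to be even, hence $\geq 2$---so the argument-sum drops by $2$ and meets the inductive bound directly, whereas you use only $q,r\geq 1$ and then invoke parity to dispose of the resulting odd boundary case; the paper also checks $n=4$ by hand and takes $b_0$ maximal, but your vacuous $n=3$ base case and mere positivity $c_0>0$ suffice.
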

\begin{proof}
This uses the main recursion relation (\ref{eq:rec1}) which becomes for $g=0$
\begin{multline*}   
b_0N_{0,n+1}(b_0,b_S)=\sum_{j>0}\frac{1}{2}\left[\sum_{p+q=b_0+b_j}pqN_{0,n}(b_S)|_{b_j=p}+\hspace{-4mm}\sum_{p+q=b_0-b_j}\hspace{-4mm}pqN_{0,n}(b_S)|_{b_j=p}\right]\\
+\frac{1}{2}\sum_{p+q+r=b_0} pqr\sum_{I\sqcup J=S}N_{0,|I|+1}(p,b_I)N_{0,|J|+1}(q,b_J)
\end{multline*}
where $S=\{1,...,n\}$ and $b_S=(b_1,...,b_n)$.  The recursion allows $b_i=0$ as explained in the remark at the end of Section~\ref{sec:stringproof}.  

We will prove the vanishing result by induction.  If $\sum b_i$ is odd then $N_{g,n}$ vanishes so we assume $\sum b_i$ is even.  When $n=4$, if $0<\sum b_i\leq 2$ then $(b_1,b_2,b_3,b_4)=(2,0,0,0)$ or $(1,1,0,0)$.  These can be explicitly evaluated using 
\[N_{0,4}^{(0)}(b_1,b_2,b_3,b_4)=-1+\frac{1}{4}\sum b_i^2,\quad N_{0,4}^{(2)}(b_1,b_2,b_3,b_4)=-\frac{1}{2}+\frac{1}{4}\sum b_i^2\]
to get $N_{0,4}^{(0)}(2,0,0,0)=0=N_{0,4}^{(2)}(1,1,0,0)$ as required.

Suppose $\sum_0^nb_i\leq 2(n-2)$ and choose $b_0$ to be the maximum of the $b_i$ so that we can easily interpret the second sum over $p+q=b_0-b_j$.  In the first summand the variables $(b_1,...,b_j=p,...,b_n)$ satisfy
\[\sum_{i=1}^nb_i-b_j+p=\sum_{i=0}^nb_i-b_0-b_j+p=\sum_{i=0}^nb_i-q\leq 2(n-2)-2=2(n-3)\]
where $q$ must be even and $q=0$ annihilates the summand through $pq$ so $q\geq 2$.  By the inductive assumption, $N_{0,n}(b_S)|_{b_j=p}$ vanishes since the sum of its variables is less than or equal to $2(n-3)$.  The variables of the second summand satisfy the same inequality by replacing the second $=$ with $\leq$ in the previous calculation since $-b_0-b_j+p\leq-b_0+b_j+p=q$ hence the second summand vanishes.  The variables of the third summand satisfy
\[ p+q+\sum_{i\in I}b_i+\sum_{i\in J}b_i=\sum_{i=0}^nb_i-r\leq 2(n-2)-2=2(n-3)\]
where as before $r\geq 2$.  Hence either 
\[p+\sum_{i\in I}b_i\leq2(|I|-2)\quad{\rm or}\quad q+\sum_{i\in J}b_i\leq2(|J|-2)\] 
so by the inductive assumption one of $N_{0,|I|+1}(p,b_I)$ and $N_{0,|J|+1}(q,b_J)$ vanishes so the summand vanishes and the induction is complete.
\end{proof} 
The vanishing result of Lemma~\ref{th:van3} is powerful enough to uniquely determine $N_{0,n}$ and we might expect to be able to write an explicit formula for each polynomial representing the quasi-polynomial $N_{0,n}$.  We have not succeeded in finding an explicit formula and instead we will be content with the following corollary.
\begin{cor}
\[N_{0,n}(b,0,...,0)=\prod_{k=1}^{n-3}\frac{b^2-4k^2}{4k}.\]
\end{cor}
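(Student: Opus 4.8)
The plan is to determine the polynomial $h_n(b^2):=N_{0,n}(b,0,\dots,0)$ by interpolation, using the genus $0$ vanishing result together with the known Euler characteristic of $\modm_{0,n}$. Since all of the arguments $0$ are even, $h_n$ is the restriction of the representing polynomial $N^{(0)}_{0,n}$ to $b_2=\dots=b_n=0$; as $N^{(0)}_{0,n}$ is a polynomial in $b_1^2,\dots,b_n^2$ whose top homogeneous part is Kontsevich's volume polynomial $V_{0,n}$ \cite{KonInt}, its total degree in the $b_i^2$ is at most $n-3$, and hence $h_n$ is a polynomial in $b^2$ of degree at most $n-3$.

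First I would exhibit $n-3$ roots of $h_n$. By Lemma~\ref{th:van3}---which, as noted there, remains valid when some $b_i=0$---we have $N_{0,n}(2k,0,\dots,0)=0$ for every $k$ with $0<2k\le 2(n-3)$, i.e.\ for $k=1,\dots,n-3$. Thus $h_n$ vanishes at the $n-3$ distinct points $b^2=4k^2$, $k=1,\dots,n-3$, and so $h_n(b^2)=c_n\prod_{k=1}^{n-3}(b^2-4k^2)$ for some constant $c_n$.

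Next I would pin down $c_n$ by evaluating at $b=0$. By (\ref{eq:euler}), $h_n(0)=N_{0,n}(0,\dots,0)=\chi(\modm_{0,n})$, and $\chi(\modm_{0,n})=(-1)^{n-3}(n-3)!$---either by \cite{HZaEul,PenPer}, or self-containedly from the dilaton equation of Theorem~\ref{th:dilaton} with all $b_i=0$, which combined with Lemma~\ref{th:van3} gives $N_{0,n+1}(0,\dots,0)=-(n-2)N_{0,n}(0,\dots,0)$ starting from $N_{0,3}(0,0,0)=1$. Since $\prod_{k=1}^{n-3}(-4k^2)=(-1)^{n-3}4^{n-3}((n-3)!)^2$, comparison gives $c_n=\frac{1}{4^{n-3}(n-3)!}=\prod_{k=1}^{n-3}\frac{1}{4k}$, and therefore $h_n(b^2)=\prod_{k=1}^{n-3}\frac{b^2-4k^2}{4k}$, with the case $n=3$ reading as the empty product $N_{0,3}(b,0,0)=1$.

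I do not expect a serious obstacle, as the argument is pure interpolation: both inputs, the degree bound and the vanishing at $b=2,4,\dots,2(n-3)$, are already available. The points requiring a little care are that the stated identity is really an identity of the polynomial $N^{(0)}_{0,n}$---it coincides with the branched-cover count only at even $b$---and that the signs combine correctly, which they do since $(-1)^{n-3}=(-1)^{n-1}$.
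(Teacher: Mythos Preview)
Your proposal is correct and follows essentially the same route as the paper: use Lemma~\ref{th:van3} to produce the $n-3$ roots $b=2,4,\dots,2(n-3)$, then fix the constant via $N_{0,n}(0,\dots,0)=\chi(\modm_{0,n})=(-1)^{n-1}(n-3)!$. Your added care about the degree bound (``at most $n-3$'') and the remark that the identity is really one of $N^{(0)}_{0,n}$ are welcome clarifications, but the argument is the same interpolation as in the paper.
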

\begin{proof}
Lemma~\ref{th:van3} implies that $N_{0,n}(b,0,...,0)=0$ for $b=2,...,2(n-3)$.  Thus $N_{0,n}(b,0,...,0)=c\prod_{k=1}^{n-3}(b^2-4k^2)/4k$ for some constant $c$, since $N_{0,n}(b,0,...,0)$ is a polynomial in $b^2$ of degree $n-3$.  Now $N_{0,n}(0,0,...,0)=\chi(\modm_{0,n})=(-1)^{n-1}(n-3)!$ hence $c=1$.
\end{proof}

\noindent {\em Remark.}  The relation
\begin{equation}  \label{eq:eulrec}
\chi(\modm_{g,n+1})=(2-2g-n)\chi(\modm_{g,n}){\rm\ for\ }2g-2+n>0
\end{equation} 
follows from the exact sequence of mapping class groups
\begin{equation}  \label{eq:exact} 
1\rightarrow\pi_1(C-\{p_1,...,p_n\})\rightarrow\Gamma_g^{n+1}\rightarrow\Gamma_g^n\rightarrow 1
\end{equation}
since the (orbifold) Euler characteristic is $\chi(\modm_{g,n})=\chi(\Gamma_g^n)$ and (\ref{eq:exact}) implies $\chi(\Gamma_g^n)=\chi(\Gamma_g^{n+1})/\chi(C-\{p_1,...,p_n\}$.  In particular \cite{HZaEul,PenPer},
\begin{align*}
&\chi(\modm_{g,n+1})=(-1)^n\frac{(2g-2+n)!}{(2g-2)!}\chi(\modm_{g,1}),\quad g>0\\ 
&\chi(\modm_{0,n+1})=(-1)^n(n-2)!\chi(\modm_{0,3}).
\end{align*}
For $n>0$, the dilaton equation specialises to (\ref{eq:eulrec}) by setting all the variables to 0:
\[ N_{g,n+1}(0,...,0)=N_{g,n+1}(2,...,0)-(2g-2+n)N_{g,n}(0,...,0)\]
but $N_{g,n+1}(2,...,0)=0$ by Lemma~\ref{th:van3} for $g=0$ and Corollary~\ref{th:van2} for $g>0$ and by (\ref{eq:euler}) $N_{g,n+1}(0,...,0)=\chi(\modm_{g,n+1})$, $N_{g,n}(0,...,0)=\chi(\modm_{g,n})$  so (\ref{eq:eulrec}) follows.  In some sense the dilaton equation reflects the exact sequence (\ref{eq:exact}).

\begin{proof}[Proof of Theorem~\ref{th:symp}]
The symplectic invariant $F^{(g)}$ of Eynard and Orantin for $g>1$ is defined by applying the dilaton equation to the case $n=0$ to get 
\[
\sum_{\alpha}\res{z=\alpha}\Phi(z)\omega_{1}^g(z)=:(2g-2)F^{(g)}.
\]
The proof of Theorem~\ref{th:dilaton} also applies to the $n=0$ case to give
\[ N_{g,1}(2)-N_{g,1}(0)=(2g-2)F^{(g)}.\]
But $N_{g,1}(2)=0$ by Lemma~\ref{th:van} and $N_{g,1}(0)=\chi(\modm_{g,1})$ by (\ref{eq:euler}).  Thus 
\[F^{(g)}=\frac{\chi(\modm_{g,1})}{2-2g}=\chi(\modm_g)\] 
where the second equality uses the $n=0$ case of (\ref{eq:eulrec}).
\end{proof}

\section{Examples}   \label{sec:example}
Here we give explicit formulae for the simplest Eynard-Orantin invariants $\omega^g_n$ for $(x,y)=(z+1/z,z)$ and the corresponding quasi-polynomials $N_{g,n}$.   The recursion relation (\ref{eq:EOrec}) begins with the kernels
\[ \omega^{(0)}_2(z_1,z_2)=\frac{dz_1dz_2}{(z_1-z_2)^2},\quad K(z_1,z)=\frac{1}{2}\frac{z^3}{(1-z^2)^2}\left(\frac{1}{z-z_1}-\frac{1}{1/z-z_1}\right)\frac{dz_1}{dz}\]
so
\begin{align*}
\omega^{(0)}_{3}(z_1,z_2,z_3)\hspace{-.5mm}&=\hspace{-1mm}\sum\hspace{-2mm}\res{z=\pm 1}\hspace{-2mm}K(z_1,z)\left(
\omega^{(0)}_{2}(z,z_2)\omega^{(0)}_{2}(1/z,z_3)+\omega^{(0)}_{2}(z,z_3)\omega^{(0)}_{2}(1/z,z_2)\right)\\
&=\left\{\frac{1}{2\prod(1-z_i)^2}-\frac{1}{2\prod(1+z_i)^2}\right\}\prod dz_i\\
\omega^{(1)}_{1}(z_1)&=\sum\res{z=\pm 1}K(z_1,z)\omega^{(0)}_{2}(z,1/z)\\
&=\left\{-\frac{1}{32}\frac{1-4z_1+z_1^2}{(1-z_1)^4}+\frac{1}{32}\frac{1+4z_1+z_1^2}{(1+z_1)^4}\right\}dz_1\\
&=\frac{z_1^3dz_1}{(1-z_1^2)^4}
\end{align*}
where in both cases the residue at $z=1$ contributes the summands with poles at $z_i=1$ and the residue at $z=-1$ contributes the summands with poles at $z_i=-1$.  The higher invariants are calculated similarly.
\begin{align*}
\omega^{(0)}_4&=\left\{\frac{3}{4\prod(1-z_i)^2}\sum\frac{z_i}{(1-z_i)^2}-\frac{3}{4\prod(1+z_i)^2}\sum\frac{z_i}{(1+z_i)^2}\right.\\
&\left.\quad+\frac{\sum z_iz_j(1+z_k^2)(1+z_l^2)}{2\prod(1-z_i^2)^2}\right\}\prod dz_i.\\
\omega^{(1)}_2&=\biggr\{\frac{5}{32\prod(1-z_i)^2}\sum\left(\frac{z_i^2}{(1-z_i)^4}-\frac{z_i}{4(1-z_i)^2}\right)+\frac{3z_1z_2}{32\prod(1-z_i)^4}\\
&\quad\quad+\frac{5}{32\prod(1+z_i)^2}\sum\left(\frac{z_i^2}{(1+z_i)^4}+\frac{z_i}{4(1+z_i)^2}\right)+\frac{3z_1z_2}{32\prod(1+z_i)^4}\\
&\quad\quad+\frac{z_1z_2}{8\prod(1-z_i^2)^2}\biggr\}dz_1dz_2\\
\omega^{(2)}_1&=\frac{21z^7(1+3z^2+z^4)dz}{(1-z^2)^{10}}
\end{align*}
In $\omega^{(0)}_4$ the sum over $\{i,j,k,l\}=\{0,1,2,3\}$ consists of 24 terms.

\begin{table}[h]  \label{tab:poly}
\begin{spacing}{1.4}  
\begin{tabular}{||l|c|c|c||} 
\hline\hline

{\bf g} &{\bf n}&\# odd $b_i$&$N_{g,n}(b_1,...,b_n)$\\ \hline

0&3&0,2&1\\ \hline
1&1&0&$\frac{1}{48}\left(b_1^2-4\right)$\\ \hline
0&4&0,4&$\frac{1}{4}\left(b_1^2+b_2^2+b_3^2+b_4^2-4\right)$\\ \hline
0&4&2&$\frac{1}{4}\left(b_1^2+b_2^2+b_3^2+b_4^2-2\right)$\\ \hline
1&2&0&$\frac{1}{384}\left(b_1^2+b_2^2-4\right)\left(b_1^2+b_2^2-8\right)$\\ \hline
1&2&2&$\frac{1}{384}\left(b_1^2+b_2^2-2\right)\left(b_1^2+b_2^2-10\right)$\\ \hline
2&1&0&$\frac{1}{2^{16}3^35}\left(b_1^2-4\right)\left(b_1^2-16\right)\left(b_1^2-36\right)\left(5b_1^2-32\right)$\\
\hline\hline
\end{tabular} 
\end{spacing}
\end{table}

The quasi-polynomials are a more compact way to express the $\omega^g_n$.  For example $N_{0,4}$ is the sum of five monomials whereas $\omega^{(0)}_4$ is the sum of 32 rational functions.  It may be useful to express Eynard-Orantin invariants of other curves more compactly.


\begin{thebibliography}{99}

\bibitem{BEMSMat} Borot, Ga\"etan; Eynard, Betrand; Mulase, Motohico and Safnuk, Brad.
\emph{A matrix model for simple Hurwitz numbers, and topological recursion.}
Journal of Geometry and Physics {\bf 61} (2011), 522-540.

\bibitem{BKMP} Bouchard, Vincent ; Klemm, Albrecht; Mari\~no, Marcos and  Pasquetti, Sara.
\emph{Remodeling the B-Model.}
Comm. Math. Phys. {\bf 287} (2009), 117-178.

\bibitem{BMaHur} Bouchard, Vincent and Marin\~o, Marcos. 
\emph{Hurwitz numbers, matrix models and enumerative geometry.} 
arXiv:0709.1458 (2008)

\bibitem{DNoCou} Do, Norman and Norbury, Paul.
\emph{Counting lattice points in compactified moduli spaces of curves.}
arXiv:1012.5923 

\bibitem{DNoWei} Do, Norman and Norbury, Paul.
\emph{Weil-Petersson volumes and cone surfaces.}  
Geometriae Dedicata {\bf 141} (2009), 93-107.

\bibitem{EynMum} Eynard, Bertrand.
\emph{Recursion between Mumford volumes of moduli spaces.}
arXiv:0706.4403

\bibitem{EOrAlg} Eynard, Bertrand and Orantin, Nicolas
\emph{Algebraic methods in random matrices
and enumerative geometry.}
arXiv:0811.353
	
\bibitem{EOrTop} Eynard, Bertrand and Orantin, Nicolas.
\emph{Topological recursion in enumerative geometry and 
random matrices.}
J. Phys. A: Math. Theor. {\bf 42} (2009) 293001.

\bibitem{EOrInv} Eynard, Bertrand and Orantin, Nicolas.
\emph{Invariants of algebraic curves and topological expansion.}
Communications in Number Theory and Physics {\bf 1}
(2007), 347Ð452.

\bibitem{EOrWei} Eynard, Bertrand and Orantin, Nicolas.
\emph{Weil-Petersson volume of moduli spaces, Mirzakhani's recursion and matrix models.}
arXiv:0705.3600

\bibitem{FayThe}  Fay, John.
\emph{Theta functions on Riemann surfaces.} 
Springer Verlag, 1973.

\bibitem{HZaEul} Harer, John and Zagier, Don.
\emph{The Euler characteristic of the moduli space of curves.}
Invent. Math. {\bf 85} (1986), 457-485.

\bibitem{KonInt}  Kontsevich, Maxim 
\emph{Intersection theory on the moduli space of curves and the matrix Airy function.} 
Comm. Math. Phys. {\bf 147} (1992), 1-23.

\bibitem{MirSim}
Mirzakhani, Maryam. 
\emph{Simple geodesics and Weil-Petersson volumes of moduli spaces of bordered Riemann surfaces.}
Invent. Math. {\bf 167} (2007), 179-222.

\bibitem{NorCou}
Norbury, Paul.
\emph{Counting lattice points in the moduli space of curves.}
Math. Res. Lett. {\bf 17} (2010), 467-481.

\bibitem{PenDec}  Penner, Robert. 
\emph{The decorated Teichm\"uller space of punctured surfaces.} Comm. Math. Phys. {\bf 113} (1987), 299-339.

\bibitem{PenPer}  Penner, Robert. 
\emph{Perturbative series and the moduli space of Riemann surfaces.} J. Diff. Geom. {\bf 27} (1988), 35-53.

\bibitem{TyuPer}  Tyurin, Andrei.
\emph{Periods of quadratic differentials.}
Russian Mathematical Surveys {\bf 33} (1978), 169-221.

\bibitem{WitTwo} Witten, Edward.
\emph{Two-dimensional gravity and intersection theory on moduli space.} Surveys in differential geometry (Cambridge, MA, 1990), 243--310, Lehigh Univ., Bethlehem, PA, 1991.

\bibitem{ZhoLoc}  Zhou, Jian.
\emph{Local Mirror Symmetry for the Topological Vertex.}
arXiv:0911.2343

\end{thebibliography}
\end{document}